\definecolor{darkblue}{rgb}{0.0, 0.0, 0.55}
\renewcommand{\subset}{\subseteq}
\renewcommand{\emptyset}{\varnothing}
\newtheorem{theorem}{Theorem}[section]
\newtheorem{lemma}[theorem]{Lemma}
\newtheorem{prop}[theorem]{Proposition}
\newtheorem{remark}[theorem]{Remark}
\newtheorem{thm}[theorem]{Theorem}
\newtheorem{lem}[theorem]{Lemma}
\newtheorem{Cor}[theorem]{Corollary}
\newtheorem*{lemma*}{Lemma}
\def\beq{\begin{equation}}
\def\eeq{\end{equation}}
\numberwithin{equation}{section}
\def\beq{\begin{equation}}
\def\eeq{\end{equation}}
\def\cD{ {{\mathcal D}}}
\def\cH{ {{\mathcal H}}}
\def\bbN{ {\mathbb N}}
\def\bbS{ {\mathbb S}}
\def\R{ {\mathbb{R}} }
\def\C{ {\mathbb{C}} }
\def\bbS{{\mathbb S}}
\def\cD{ {\mathcal D} }
\def\cH{ {\mathcal H} }
\def\cM{{\mathcal M}}
\def\cR{{ \mathcal R }}
\def\beq{\begin{equation}}
\def\eeq{\end{equation}}
\def\cR{{\mathcal R}}
\newcommand{\df}[1]{{\bf{#1}}{\index{#1}}}
\def\abs{\partial^{\mathrm{abs}}}
\def\maxK{K}
\def\maxRKX{R_{K_X}}
\title{Matrix convex sets without absolute extreme points}
\author[E. Evert]{Eric Evert${}^1$}
\address{Eric Evert, Department of Mathematics\\
  University of California \\
  San Diego
   }
   \email{eevert@ucsd.edu}
\thanks{${}^1$Research supported by the NSF grant
DMS-1500835}
\begin{document}
\maketitle

\begin{abstract}
This article shows the existence of a class of closed bounded matrix convex sets which do not have absolute extreme points. The sets we consider are noncommutative sets, $K_X$, formed by taking matrix convex combinations of a single tuple $X$. In the case that $X$ is a tuple of compact operators with no nontrivial finite dimensional reducing subspaces, $K_X$ is a closed bounded matrix convex set with no absolute extreme points.

A central goal in the theory of matrix convexity is to find a natural notion of an extreme point in the dimension free setting which is minimal with respect to spanning. Matrix extreme points are the strongest type of extreme point known to span matrix convex sets; however, they are not necessarily the smallest set which does so. Absolute extreme points, a more restricted type of extreme points that are closely related to Arveson's boundary, enjoy a strong notion of minimality should they span. This result shows that matrix convex sets may fail to be spanned by their absolute extreme points.
\end{abstract}

\section{Introduction}

One of the central topics in matrix convexity is the subject of extreme points. In the dimension free setting there are many notions of an extreme point. One class, introduced by Webster and Winkler in  \cite{WW99}, is the notion of a matrix extreme point. The main result in \cite{WW99} shows that a closed bounded matrix convex set $K$ is spanned by its matrix extreme points, i.e. the closed matrix convex hull of the matrix extreme points of $K$ is equal to $K$, and is a critical result in the theory of matrix convex sets. However, it is often the case that a proper subset of the matrix extreme points spans $K$. In fact, a matrix convex combination involving a single matrix extreme point of $K$ can produce a new matrix extreme point \cite{A69,F00,F04}. As of today, it remains unknown if there is a natural notion of extreme points for matrix convex sets which is minimal with respect to spanning.

A more restricted class of extreme point is the notion of an absolute extreme point which was introduced by Kleski \cite{KLS14}. This class of extreme points is closely related to Arveson's notion \cite{A69} of a boundary representation of an operator system \cite{KLS14,EHKM16}. In the setting of matrix convex sets in finite dimensions the primary distinction between these notions is that boundary representations may be infinite dimensional objects while absolute extreme points, being elements of a matrix convex set, are finite dimensional objects. If one extends a matrix convex set to the corresponding operator convex set which contains infinite dimensional tuples, then these notions coincide. For a recent account of results related to extreme points of matrix convex sets see \cite{EHKM16}.

 When Arveson first proposed the existence of boundary representations, he conjectured that each operator system had sufficiently many boundary representations to completely norm it. Much work has gone into this question in the subsequent years.

 Hanama \cite{Ham79} took the first major step towards a positive answer by showing the existence of the $C^*$-envelope, the $C^*$-algebra generated by the would be boundary representations. Many years later Dritschel and McCullough \cite{DM05} took another important step by introducing and proving the existence of maximal dilations of general representations. Using this dilation theoretic approach, Dritschel and McCullough gave a new proof of the existence of the $C^*$-envelope.

 After the result of Dritschel and McCullough, Arveson \cite{A08} showed the equivalence of boundary representations and irreducible maximal representations and was able to prove that, in the seperable case, an operator system is completely normed by its boundary representations. A few years later Arveson's question was put to rest when Davidson and Kennedy \cite{DK15} proved in the general setting using a dilation theoretic argument that every operator system has sufficiently many boundary representations to completely norm it. For additional material related to boundary representations and the $C^*$-envelope we direct the reader to \cite{MS98,FHL+}.

Although the situation is well understood in Arveson's infinite dimensional setting, in the finite dimensional setting it has remained unknown if matrix convex sets are spanned by their absolute extreme points. In this article we provide a negative answer to this question by giving a class of closed bounded matrix convex sets which do not have absolute extreme points.

For a finite collection $\{X^1, \dots, X^k\}$ of irreducible $g$-tuples of self-adjoint operators, call its noncommutative convex hull a \df{noncommutative hull polygon} with \df{corners} $X^1, \dots, X^k$. If $K$ is a noncommutative hull polygon and there is a finite matrix convex combination of its corners which is equal to $0 \in \R^g$, then we say $0$ is in the finite interior of $K$. The following corollary gives a partial statement of our main result, Theorem \ref{thm:NoAbsExt}.
\begin{Cor}
\label{Cor:HullPolygon}
Let $K$ be a noncommutative hull polygon with corners $X^1, \dots, X^k$ that are each irreducible $g$-tuples of compact operators acting on a separable infinite dimensional Hilbert space. Also assume that $0$ is in the finite interior of $K$. Then $K$ has no absolute extreme points.
\end{Cor}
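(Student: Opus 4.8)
The plan is to reduce the corollary to the single-tuple case treated by Theorem~\ref{thm:NoAbsExt}. Set $X = X^1 \oplus \cdots \oplus X^k$, a $g$-tuple acting on the separable, infinite-dimensional Hilbert space $H = H_1 \oplus \cdots \oplus H_k$, where each $X^i$ acts on $H_i$. First I would verify that the noncommutative hull polygon $K$ is literally $K_X$: a matrix convex combination $\sum_{i,j} A_{ij}^* X^i A_{ij}$ of the corners, with $\sum_{i,j} A_{ij}^* A_{ij} = I$ and $A_{ij} : \C^n \to H_i$, can be rewritten as a single matrix convex combination of $X$ by setting $B_j = \bigoplus_i A_{ij} : \C^n \to H$, so that $B_j^* X B_j = \sum_i A_{ij}^* X^i A_{ij}$ and $\sum_j B_j^* B_j = \sum_{i,j} A_{ij}^* A_{ij} = I$; the reverse inclusion is the same computation read backwards. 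Hence $K = K_X$. Since a finite direct sum of compact operators is compact and $H$ is separable and infinite-dimensional, $X$ satisfies the standing hypotheses of Theorem~\ref{thm:NoAbsExt} except possibly the reducing-subspace condition, which is the one genuine thing to check.

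The key step is to show that $X$ has no nontrivial finite-dimensional reducing subspace. Suppose $M \subseteq H$ is such a subspace and let $P = (P_{ij})_{i,j=1}^k$ be the finite-rank orthogonal projection onto $M$, written in block form relative to $H = \bigoplus_i H_i$. Because $P$ commutes with each $X_\ell = \bigoplus_i X^i_\ell$ and $X_\ell$ is block diagonal, reading off the $(i,i)$ block gives $P_{ii} X^i_\ell = X^i_\ell P_{ii}$ for all $\ell$; since $X^i$ is irreducible its commutant is trivial, so $P_{ii} = c_i I_{H_i}$ for a scalar $c_i$. As $P$ is finite rank, each $P_{ii}$ is finite rank, and $\dim H_i = \infty$ forces $c_i = 0$. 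Thus $P$ has vanishing diagonal blocks, and from $P = P^2 = P^*$ the $(i,i)$ block of $P^2$ equals $\sum_j P_{ij} P_{ij}^* = 0$, whence each $P_{ij} = 0$ and $M = \{0\}$. (Equivalently, an irreducible tuple on a space of dimension greater than one has no common eigenvector, and the block computation promotes this to the direct sum.) This is precisely the point at which irreducibility and infinite-dimensionality of the corners are both used.

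With the reducing-subspace condition established, the hypothesis that $0$ lies in the finite interior of $K$ supplies the remaining input to Theorem~\ref{thm:NoAbsExt}: it guarantees that $0 \in \R^g$ is realized as an honest finite matrix convex combination of the corners, i.e.\ that $0$ occurs at level one of $K_X$, which is the normalization under which the main theorem produces a nontrivial dilation of each element of $K_X$. Applying Theorem~\ref{thm:NoAbsExt} to $X$ then gives that $K = K_X$ has no absolute extreme points. The only real obstacle inside the corollary's proof is the verification in the previous paragraph that forming the direct sum of the irreducible corners creates no finite-dimensional reducing subspaces; the rest is bookkeeping that identifies $K$ with $K_X$ and matches the hypotheses of the main theorem, where the substantive dilation argument resides.
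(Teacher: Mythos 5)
Your proposal is correct and takes essentially the same route as the paper: form the direct sum $X = X^1 \oplus \cdots \oplus X^k$, identify $K$ with $K_X$, and apply Theorem~\ref{thm:NoAbsExt}. The paper's proof is just this reduction stated in three lines, asserting without detail that $X$ has no nontrivial finite-dimensional reducing subspaces; your block-projection argument (diagonal blocks scalar by irreducibility, hence zero by finite rank and $\dim H_i = \infty$, hence all blocks zero via $P = P^2 = P^*$) correctly supplies the verification the paper leaves implicit.
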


In the remainder of this section we introduce our basic definitions and notation. Theorem \ref{thm:NoAbsExt} is stated at the end of the section.

\subsection{Notation and Definitions}
Throughout the paper we let $\cH$ be a separable Hilbert space and we take $(\cH_n)_n$ to be a nested sequences of subspaces of $\cH$ such that
\[
\dim(\cH_n)=n \mathrm{\ for\ all\ } n \in \bbN \mathrm{\ and\ } \cH=\overline{\cup_n \cH_n}
 \]
 where the closure is in norm. For any Hilbert space $\cM$, we use the notation $\cM^d=\oplus_1^d \cM$ where $d\in \bbN \cup \{ \infty\}$. We use $B(\cM)^g, \bbS(\cM)^g,$ and $\mathfrak{K} (\cM)^g$ to denote the sets
 of $g$-tuples of bounded operators, bounded self-adjoint operators, and compact self-adjoint operators on $\cM$, respectively. Similarly, given Hilbert spaces $\cM_1, \cM_2$, we let $B(\cM_1,\cM_2)$ be the set of bounded operators mapping $\cM_1 \to \cM_2$.

 Given tuples $Y, Z \in B(\cM)^g$ say $Y$ and $Z$ are \df{unitarily equivalent}, denoted by $Y \sim_u Z$, if there exists a unitary $U:\cM \to \cM$ such that
 \[
 U^*YU=(U^* Y_1 U, \dots, U^* Y_g U)=Z.
 \]

The set $\Gamma \subset \cup_n \bbS(\cH_n)^g$ is a sequence $\Gamma=(\Gamma(n))_n$ where $\Gamma(n) \subset \bbS(\cH_n)^g$ for all $n \in \bbN$. We say $\Gamma$ is \df{closed with respect to direct sums} if for any pair of positive integers $n,m \in \bbN$ and tuples $Y \in  \Gamma(n)$ and $Z \in \Gamma(m)$, the tuple $Y \oplus Z \in \Gamma (n+m)$. Here
\[
Y \oplus Z= (Y_1 \oplus Z_1, \dots, Y_g \oplus Z_g).
\]
Similarly, $\Gamma$ is \df{closed with respect to unitary conjugation} if, for each $n \in \bbN$ and $Y \in \Gamma(n)$ and each unitary $U\in B(\cH_n)$ we have
\[
U^*Y U \in \Gamma(n).
\]
If $\Gamma \subset \cup_n \bbS(\cH_n)^g$ is closed with respect to direct sums and unitary conjugation then $\Gamma$ is a \df{free set}. We say $\Gamma \subset \cup_n \bbS(\cH_n)^g$ is \df{bounded} if there is an $C \in \R^+$ such that $C-\sum_{i=1}^g Y_i^2 \succeq 0$ for all $Y \in \Gamma$. If $\Gamma(n)$ is closed for all $n \in \bbN$, then we say $\Gamma=(\Gamma(n))_n$ is \df{closed}.

\subsection{Matrix Convex Sets} Let $K \subset \cup_n \bbS(\cH_n)^g$.  A \df{matrix convex combination} of elements of $K$ is a finite sum of the form
\[
\sum_{i=1}^k V_i^* Y^i V_i
\]
where $Y^i \in K(n_i)$ for $i=1, \dots, k$ and $\sum_{i=1}^k V_i^* V_i =I_n$. If additionally $V_i \neq 0$ for each $i$ then the sum is said to be \df{weakly proper}. If $K$ is closed under matrix convex combinations  then $K$ is \df{matrix convex}. It should be noted that a matrix convex set is automatically a free set. Furthermore, to show a free set $K$ is matrix convex, it is sufficient to show that $K$ is closed under \df{isometric conjugation}, i.e. for any $m \leq n \in \bbN$, any $Y \in K(n)$ and any isometry $V:\cH_m \to \cH_n$, the tuple $V^* Y V \in K(m)$.

Given a matrix convex set $K$, say $Y \in K(n)$ is an \df{absolute extreme point} of $K$ if whenever $Y$ is written as a weakly proper matrix convex combination $Y=\sum_{i=1}^k V_i^* Z^i V_i$, then for all $i$ either $n_i=n$ and $Y \sim_u Z^i$ or $n_i > n$ and there exists $A^i \in K$ such that $Y \oplus A^i \sim_u Z^i$.

\subsection{Linear Pencils and Free Spectrahedra}

One class of examples of matrix convex sets are free spectrahedra, the solution sets to linear matrix inequalities. Let $d \in \bbN$ be a positive integer and let
\[
A=(A_0, A_1, \dots A_g) \in B(\cH_d)^{g+1}
\]
 be a $g+1$-tuple of finite dimensional operators on $\cH_d$. A \df{linear pencil}, denoted by $L_A$, is the map $y \mapsto L_A(y)$ defined by
\[
L_A (y)=A_0-A_1 y_1 - \dots - A_g y_g.
\]
We also allow $A \in B(\cH)^{g+1}$ to be a $g+1$-tuple of operators on $\cH$. In this case we say that $L_A$ is a \df{linear operator pencil}.
If $A$ is a $g+1$-tuple of self-adjoint operators, we will say $L_A$ is a \df{symmetric} linear (operator) pencil.

Given a positive integer $n$ and a tuple $Y \in B(\cH_n)^g$ for some $n$ and a linear (operator) pencil $L_A$ the \df{evaluation} of $L_A$ on $Y$ is defined by
\[
L_A(Y)=A_0 \otimes I_n-A_1 \otimes Y_1 -\dots - A_g \otimes Y_g.
\]

We are often interested in the case where $A_0=I$. In this case, given a $g$-tuple $A=(A_1, \dots, A_g)$ of operators on $B(\cH)^g$ or $B(\cH_d)^g$ for some  $d \in \bbN$, we use $\mathcal{L}_A$ to denote the \df{monic} linear (operator) pencil
\[
\mathcal{L}_A (y)=L_{(I,A)}(y)=I-A_1 y_1- \dots -A_g y_g.
\]

Let $A \in \bbS(\cH_d)^g$ be a $g$-tuple of self-adjoint operators and let $\mathcal{L}_A$ be the associated symmetric monic linear pencil. For each $n \in \bbN$ we define the \df{free spectrahedron at level $n$}, denoted $\cD_A (n)$, by
\[
\cD_A (n)=\{Y \in \bbS(\cH_n)^g|\ \mathcal{L}_A(Y) \succeq 0\}.
\]
The corresponding \df{free spectrahedron} is the sequence $\cD_A=(\cD_A(n))_n$. If $\mathcal{L}_A$ is a symmetric monic linear operator pencil, i.e. $A \in \bbS(\cH)^g$, we will say $\cD_A$ is an \df{operator free spectrahedron}. It is straight forward to show that (operator) free spectrahedra are matrix convex sets.

 \subsection{Completely Positive Maps}
We will assume the reader's familiarity with operator systems and completely positive maps. For a comprehensive discussion of these subjects see \cite{P02}.

Let $\cR$ be an operator system and let $\phi: \cR \to B(\cM_1)$ be a unital completely positive map for some Hilbert space $\cM_1$. A \df{dilation} of $\phi$ is a unital completely positive map of the form $\psi:\cR \to B(\cM_2)$ such that $\cM_2$ is a Hilbert space containing $\cM_1$ and $\iota^*_{\cM_1} \psi (r) \iota_{\cM_1}=\phi(r)$ for all $r \in \cR$. Here $\iota_{\cM_1}:\cM_1 \to \cM_2$ is the inclusion map of $\cM_1$ into $\cM_2$. A unital completely positive map $\phi$ is called \df{maximal} if any dilation $\psi$ of $\phi$ has the form $\psi=\phi \oplus \psi'$ for some unital completely positive $\psi'$.

We use the notation
\[
CS_n (\cR)=\{\phi:\cR \to B(\cH_n)|\ \phi \mathrm{\ is\ u.c.p.}\}
\]
 to denote the set of unital completely positive maps sending $\cR$ into $B(\cH_n)$ and we define
 \[
 CS(\cR)=\cup_n CS_n (\cR)
 \]
 to be the set of unital completely positive maps on $\cR$ with finite dimensional range.

\subsection{Main Result}

The main result of this paper is Theorem \ref{thm:NoAbsExt} which gives a class of closed bounded matrix convex sets each of which has no absolute extreme points. Our candidate sets are each noncommutative convex hulls, sets we now define.

Let $X \in \bbS(\cH)^g$ and for each $n \in \bbN$ define the set $K_X (n) \subset \bbS(\cH_n)^g$ by
\beq
\label{eq:KxDef}
K_X (n)=\{Y \in \bbS(\cH_n)^g | Y=V^* (I_\cH \otimes X)V  \mathrm{ \ for \ some \ isometry \ } V:\cH_n \to \oplus_1^\infty \cH \}.
\eeq
We then define $K_X \subset \cup_n \bbS(\cH_n)^g$ by
\beq
\label{eq:KxDefPart2}
K_X=\cup_n K_X (n).
\eeq
We call $K_X$ the \df{noncommutative convex hull} of X.

Given a $g$-tuple $X$, we say \df{$0$ is in the finite interior of $K_X$} if there exist an integer $d\in \bbN$ and a unit vector $v \in (\cH)^d$ such that
\[
v^* (I_d \otimes X ) v=0 \in \R^g.
\]

 We remark that our notion of the noncommutative convex hull of $X$ is closely related to Arveson's notion of the matrix range of $X$ \cite{A72}, the primary distinction being that a noncommutative convex hull may fail to be closed. In addition to \cite{A72}, see \cite{DDSS17} for a discussion of matrix ranges.

\begin{thm}
\label{thm:NoAbsExt}

Let $X \in \mathfrak{K}(\cH)^g$ be a $g$-tuple of compact self-adjoint operators on $\cH$ and let $K_X$ be the noncommutative convex hull of $X$. Assume that $X$ has no nontrivial finite dimensional reducing subspaces and assume $0$ is in the finite interior of $K_X$. Then $K_X$ is a closed bounded matrix convex set and $\abs K_X= \emptyset$.

\end{thm}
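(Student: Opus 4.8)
The plan is to verify boundedness, matrix convexity and closedness by fairly direct means, and then to reduce the statement $\abs K_X=\emptyset$ to a single structural fact about $\widehat X:=I_\cH\otimes X$.

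Boundedness and matrix convexity are routine. For $Y=V^*\widehat X V\in K_X(n)$ with $V$ an isometry, put $P=VV^*\preceq I$; then $Y_j^2=V^*\widehat X_jP\widehat X_jV\preceq V^*\widehat X_j^2V$, so $\sum_j Y_j^2\preceq V^*\big(\sum_j\widehat X_j^2\big)V\preceq\big(\sum_j\|X_j\|^2\big)I_n$, giving boundedness. For matrix convexity it suffices to check that $K_X$ is a free set closed under isometric conjugation: if $Y=V^*\widehat X V$ and $W$ is an isometry then $W^*YW=(VW)^*\widehat X(VW)$ with $VW$ an isometry, while direct sums are handled by the identification $(\oplus_1^\infty\cH)\oplus(\oplus_1^\infty\cH)\cong\oplus_1^\infty\cH$, under which $\widehat X\oplus\widehat X\cong\widehat X$.

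For closedness I would identify $K_X(n)$ with Arveson's matrix range $\mathcal W_n(X)=\{\phi(X):\phi\in CS_n(\mathcal S_X)\}$, where $\mathcal S_X=\mathrm{span}\{I,X_1,\dots,X_g\}$; since $CS_n(\mathcal S_X)$ is compact in the point--weak topology, $\mathcal W_n(X)$ is closed, so it is enough to prove $K_X(n)=\mathcal W_n(X)$. The inclusion $K_X(n)\subseteq\mathcal W_n(X)$ is immediate. For the reverse inclusion I would use the representation theory of $C^*(X,I)$: because the $X_j$ are compact, $C^*(X)$ is a $c_0$-direct sum of algebras $\mathfrak K(\cH_\alpha)$, and every representation of $C^*(X,I)$ is unitarily an ampliation $\mathrm{id}^{(\kappa)}$ of the identity together with a summand on which the compacts act as $0$. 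By Stinespring a point of $\mathcal W_n(X)$ then has the form $V_1^*\widehat X V_1$ with $V_1$ a contraction and $V_1^*V_1=I_n-D$, $D\succeq0$. Here the finite-interior hypothesis enters: choosing a unit vector $w\in\cH^d$ with $w^*(I_d\otimes X)w=0$, I would build an operator $G$ into fresh copies of $\cH$ with $G^*G=D$ and $G^*\widehat X G=0$, so that $U=V_1\oplus G$ is an isometry with $U^*\widehat X U=V_1^*\widehat X V_1$, exhibiting the point in $K_X(n)$. Reconciling the contractions produced by Stinespring with the isometries in the definition of $K_X$—that is, absorbing the defect $D$ without altering the evaluation—is the main obstacle, and it is exactly where compactness and the finite-interior assumption are used.

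It remains to show $\abs K_X=\emptyset$, and the key structural fact is the following: \emph{$\widehat X=I_\cH\otimes X$ has no nonzero finite-dimensional reducing subspace.} I would prove this through the commutant $\{\widehat X_j\}'=B(\cH)\bar\otimes\mathcal M'$, where $\mathcal M=W^*(X)$: the hypothesis that $X$ has no nontrivial finite-dimensional reducing subspace says precisely that $\mathcal M'$ contains no nonzero finite-rank projection. If $Q$ were a nonzero finite-rank projection in $B(\cH)\bar\otimes\mathcal M'$, its partial trace over the first factor would be a nonzero positive trace-class element of $\mathcal M'$, whose spectral projections lie in $\mathcal M'$ and are finite-rank, a contradiction. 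Granting this, fix $Y=V^*\widehat X V\in K_X(n)$ and set $\mathcal R=\mathrm{ran}\,V$, which is $n$-dimensional and hence not reducing for $\widehat X$. Thus there are $\xi\in\mathcal R$ and an index $j_0$ with $\eta:=(I-P_\mathcal R)\widehat X_{j_0}\xi\neq0$, and compressing $\widehat X$ to $\mathcal R\oplus\C\eta$ produces a tuple $\begin{pmatrix}Y&\beta\\\beta^*&\gamma\end{pmatrix}\in K_X(n+1)$ with $\beta\neq0$. By the dilation-theoretic characterization of absolute extreme points as those tuples admitting only trivial dilations—every dilation in $K_X$ having $\beta=0$—the existence of this nonzero dilation shows $Y\notin\abs K_X$. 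Since $Y$ was arbitrary, $\abs K_X=\emptyset$.
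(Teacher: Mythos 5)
Your proposal is correct, but both halves take a genuinely different route from the paper's. For closedness the paper never passes through matrix ranges: it truncates to $P_\ell X P_\ell$, feeds the spectrahedral inclusion $\cD_{X^\ell}\subseteq\cD_{Y^\ell}$ into the convex Positivstellensatz of \cite{HKM12} to obtain a multiplicity bound $m_n$ depending only on $n$ and $g$ (Proposition~\ref{prop:FixedBoundOnXSize}), and then takes weak operator limits of contractions, using compactness of $X$ to upgrade to norm convergence (Lemma~\ref{lem:XCompactConvergence}); you instead identify $K_X(n)$ with Arveson's matrix range via Arveson extension, Stinespring, and the representation theory of $C^*$-algebras of compact operators. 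The defect-absorption step is identical in both arguments (your operator $G$ is precisely Lemma~\ref{lem:ContractionToIso}). One imprecision to fix: since the hypotheses permit infinite-dimensional reducing subspaces, a representation of $C^*(X,I)$ need not be an ampliation of the identity plus a null summand; it is a direct sum of irreducible subrepresentations of the identity (restrictions of $X$ to reducing subspaces) plus a null summand, and since such restrictions are themselves isometric compressions of $I_\cH\otimes X$ your argument survives unchanged. For $\abs K_X=\emptyset$ the paper translates into operator-system language---Webster--Winkler's $\cR_{K_X}$, Kleski's correspondence, Arveson maximality---and needs Proposition~\ref{prop:FixedBoundOnXSize} a second time so that its finite-ampliation Lemma~\ref{lem:InvarSubspaces} applies to $I_{m_n}\otimes X$. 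You stay inside the matrix convex set: your commutant/partial-trace lemma is a correct strengthening of Lemma~\ref{lem:InvarSubspaces} to the infinite ampliation (the partial trace of a finite-rank projection commuting with every $I_\cH\otimes X_j$ is a nonzero positive trace-class element of $\{X_1,\dots,X_g\}'$, whose nonzero spectral projections are finite-rank projections in that commutant), and it lets you work with the defining isometry $V$ directly, bypassing \cite{HKM12} entirely in this half; you then build a one-dimensional dilation with $\beta\neq 0$ and invoke the fact that absolute extreme points admit only trivial dilations. That fact is legitimately citable: it is the ``absolute extreme $=$ irreducible $+$ Arveson boundary'' characterization of \cite{EHKM16}, or it follows from \cite{KLS14} and \cite{A08} as in the paper---indeed your nontrivial dilation exactly witnesses that $\phi_Y$ is not maximal, which is the paper's criterion. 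What each approach buys: the paper's route keeps the connection to boundary representations explicit (its conceptual point) and reuses one quantitative proposition in both halves, while yours is more economical in the extreme-point half and makes the reducing-subspace obstruction transparent, at the cost of heavier operator-algebraic input (Stinespring and the structure of representations of algebras of compacts) in the closedness half.
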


We now give the proof of Corollary \ref{Cor:HullPolygon}.

\begin{proof}
Set $X=\oplus_{i=1}^k X^i$. Then $X$ is an $g$-tuple of compact operators which has no nontrivial finite dimensional reducing subspaces. Furthermore, $0$ is in the finite interior of $K_X$ by assumption. Theorem \ref{thm:NoAbsExt} completes the proof.
\end{proof}

\subsection{Guide to the Paper}

The paper is organized in the following manner. In Section \ref{section:Closed} we show that $K_X$ is a bounded matrix convex set for any tuple $X$. We then show that such a set is closed provided that $X$ is compact and $0$ is in the finite interior of $K_X$.

Sections \ref{section:MatrixAffine} and \ref{section:NoAbsExtreme} show that the set $K_X$ has no absolute extreme points if $X$ is compact and has no nontrivial finite dimensional reducing subspaces. This is accomplished by introducing an operator system $\maxRKX$ such that $CS(\maxRKX),$ the set of unital completely positive maps on $\cR_{K_X}$ with finite dimensional range, is matrix affine homeomorphic to $K_X$. In particular, we show that there are no maximal unital completely positive maps in $CS(\cR_{K_X})$.

 Section \ref{section:MatrixAffine} introduces the notion of matrix affine maps and discusses the equivalence between the Arveson extreme points of $K_X$ and the irreducible maximal completely positive maps in $CS(\maxRKX)$. Section \ref{section:NoAbsExtreme} then completes the proof of Theorem \ref{thm:NoAbsExt} by showing that, if $X$ has no nontrivial finite dimensional reducing subspaces, then there are no maximal completely positive maps in $CS(\maxRKX)$.

 The paper ends with Section \ref{section:ExampleX} where we give an explicit example of a tuple $X$ which satisfies the assumptions of Theorem \ref{thm:NoAbsExt}.

\section{Matrix Convex Sets which are Matrix Convex Combinations of a Compact Tuple}
\label{section:Closed}

Our first objective is to show that noncommutative convex hulls are bounded matrix convex set and that there are reasonable assumptions which can be made on $X$ such that $K_X$ is closed. The main result of this section is Theorem \ref{theorem:KxClosedBoundedMatConvex} which shows that $K_X$ is a closed bounded matrix convex set when $X$ is a tuple of compact operators and $0$ is in the finite interior of $K_X$. We begin the section with a lemma which shows that $K_X$ is a bounded matrix convex set for any $X \in \bbS(\cH)^g$.

\begin{lem}
\label{lem:KxBoundedMatConvex}
Let $X \in \bbS(\cH)^g$ be a $g$-tuple of self-adjoint operators on $\cH$ and let $K_X$ be the noncommutative convex hull of $X$. Then $K_X$ is a bounded matrix convex set.
\end{lem}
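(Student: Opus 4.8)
The plan is to verify the two required properties separately: boundedness, and closure under matrix convex combinations. For boundedness, I would observe that $X \in \bbS(\cH)^g$ is a tuple of bounded operators, so there is a constant $C \in \R^+$ with $C - \sum_{i=1}^g X_i^2 \succeq 0$, equivalently $C - \sum_i (I_\cH \otimes X_i)^2 \succeq 0$ on $\oplus_1^\infty \cH$. For any $Y = V^*(I_\cH \otimes X)V \in K_X(n)$ with $V$ an isometry, I would estimate $\sum_i Y_i^2 = \sum_i V^*(I_\cH \otimes X_i) V V^*(I_\cH \otimes X_i) V$. Since $VV^* \preceq I$ (because $V$ is an isometry, so $VV^*$ is a projection), each term satisfies $V^*(I_\cH \otimes X_i)VV^*(I_\cH \otimes X_i)V \preceq V^*(I_\cH \otimes X_i)^2 V$, giving $\sum_i Y_i^2 \preceq V^*\big(\sum_i (I_\cH\otimes X_i)^2\big)V \preceq V^*(C\,I)V = C\,I_n$. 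Thus the same bound $C$ works at every level, establishing boundedness uniformly.

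For matrix convexity, as the excerpt notes it suffices to check that a free set is closed under isometric conjugation, so I would first confirm $K_X$ is a free set (closed under direct sums and unitary conjugation) and then verify isometric conjugation. Closure under unitary conjugation is immediate: if $Y = V^*(I_\cH \otimes X)V$ and $U \in B(\cH_n)$ is unitary, then $U^*YU = (VU)^*(I_\cH \otimes X)(VU)$ and $VU$ is again an isometry into $\oplus_1^\infty \cH$. For direct sums, given $Y = V^*(I_\cH \otimes X)V \in K_X(n)$ and $Z = W^*(I_\cH \otimes X)W \in K_X(m)$, the issue is that $V$ and $W$ both map into $\oplus_1^\infty \cH$ and I must combine them into a single isometry from $\cH_{n+m}$. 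The key device is that $\oplus_1^\infty \cH \cong (\oplus_1^\infty \cH) \oplus (\oplus_1^\infty \cH)$ via some unitary $U_0$; using this identification I can form the block isometry $\widetilde{V} = U_0 (V \oplus W): \cH_{n+m} \to \oplus_1^\infty \cH$ and check that $\widetilde{V}^*(I_\cH \otimes X)\widetilde{V} = Y \oplus Z$, since $I_\cH \otimes X$ respects the two-fold ampliation identified by $U_0$. This shows $Y \oplus Z \in K_X(n+m)$.

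Finally, for isometric conjugation, let $Y = V^*(I_\cH \otimes X)V \in K_X(n)$ and let $W:\cH_m \to \cH_n$ be an isometry with $m \le n$. Then $W^*YW = W^*V^*(I_\cH \otimes X)VW = (VW)^*(I_\cH \otimes X)(VW)$, and $VW:\cH_m \to \oplus_1^\infty \cH$ is a composition of isometries, hence an isometry; therefore $W^*YW \in K_X(m)$. Combined with the free set property, this yields matrix convexity.

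I expect the only genuinely delicate step to be the direct sum argument, where one must fix an explicit unitary identification $\oplus_1^\infty \cH \cong (\oplus_1^\infty \cH)^{\oplus 2}$ and confirm it intertwines $I_\cH \otimes X$ with its two-fold ampliation so that the block isometry produces exactly $Y \oplus Z$; the boundedness and isometric-conjugation steps are routine once the estimate $VV^* \preceq I$ is in hand.
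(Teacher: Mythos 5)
Your proof is correct and follows essentially the same route as the paper: boundedness via compression by an isometry (the paper simply notes $\|V^*(I_\cH\otimes X)V\|\leq\|X\|$, while you verify the definition $C-\sum_i Y_i^2\succeq 0$ directly using $VV^*\preceq I$), and matrix convexity via the free-set-plus-isometric-conjugation criterion. The paper dismisses the direct sum and isometric conjugation steps as straightforward/by definition; your write-up, including the interleaving unitary $U_0$ identifying $(\oplus_1^\infty\cH)^{\oplus 2}$ with $\oplus_1^\infty\cH$ and intertwining the ampliations, is exactly the detail being elided.
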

\begin{proof}
To see $K_X$ is bounded, observe that for any $n$ and any isometry $V:\cH_n \to \cH^\infty$ we have the inequality
\[
\|V^*(I_\cH \otimes X)V\| \leq \|(I_\cH \otimes X)\|= \|X\|.
\]

 Furthermore, it is straight forward to show that $K_X$ is closed under direct sums. Since $K_X$ is closed under isometric and unitary conjugation by definition, it follows that $K_X$ is matrix convex.

\end{proof}

We now aim to prove that $K_X$ is closed when $X$ is compact and $0$ is in the finite interior of $K_X$. Proposition \ref{prop:FixedBoundOnXSize} shows that, with these assumptions, for each fixed $n$ the set $K_X (n)$ can be defined as the set of compressions of a tuple of compact operators and is the key result in proving that $K_X$ is closed.

\def\ktuples{{\mathfrak{K}(\cH)^g}}

\begin{prop}
\label{prop:FixedBoundOnXSize}
Let $X \in \ktuples$ be a $g$-tuple of self-adjoint compact operators on $\cH$.
\begin{enumerate}
\item
\label{it:FiniteSumContraction}
For each $n \in \bbN$ there exists an integer $m_n$ depending only on $n$ and $g$ such that for all $Y \in K_X (n)$ there is a contraction $W:\cH_n \to \cH^{m_n}$ such that $Y=W^* (I_{m_n} \otimes X) W$.

\item
\label{it:FiniteSumIso}
Assume $0$ is in the finite interior of $K_X$. Then there exists an integer $m_{0}$ depending only on $g$ such that for all $Y \in K_X (n)$ there is an isometry $T:\cH_n \to \cH^{m_n+nm_{0}}$ such that $Y=T^* (I_{m_n+nm_{0}} \otimes X) T$. In particular we have
\beq
\label{eq:KxFiniteSum}
K_X (n)=\{Y \in \bbS(\cH_n)^g | \ Y=T^* (I_{m_n+nm_{0}} \otimes X)T  \mathrm{ \ for \ some \ isometry \ } T:\cH_n \to  \cH^{m_n+nm_{0}}\}.
\eeq
\end{enumerate}
\end{prop}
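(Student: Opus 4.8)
The plan is to carry out both parts inside the finite-dimensional real vector space $\cV=\bbS(\cH_n)^{g+1}$, whose dimension $D=\dim_{\bbR}\bbS(\cH_n)^{g+1}$ depends only on $n$ and $g$. For a bounded operator $W:\cH_n\to\cH$ set
\[
\Phi(W)=\bigl(W^*W,\ W^*X_1W,\ \dots,\ W^*X_gW\bigr)\in\cV,
\]
which is positively homogeneous of degree $2$, so its image $S=\{\Phi(W)\}$ is a cone in $\cV$. Writing an isometry $V:\cH_n\to\cH^\infty$ through its components $V^{(j)}:\cH_n\to\cH$, the defining relations of $K_X(n)$ become $\sum_j(V^{(j)})^*V^{(j)}=I_n$ and $Y_i=\sum_j(V^{(j)})^*X_iV^{(j)}$; both series converge absolutely (Cauchy--Schwarz against the isometry identity), so $(I_n,Y_1,\dots,Y_g)=\sum_{j=1}^\infty\Phi(V^{(j)})$ is a convergent infinite sum of elements of the cone $S$. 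Part (\ref{it:FiniteSumContraction}) then amounts to replacing this infinite sum by one with at most $D$ terms, and part (\ref{it:FiniteSumIso}) to completing the resulting contraction to an isometry.

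For part (\ref{it:FiniteSumContraction}) the key object is the closed cone $\widehat S=\overline S\subseteq\cV$. I would first show that every element of $\widehat S$ has the form $(M_0,W^*X_1W,\dots,W^*X_gW)$ with $W^*W\preceq M_0$: given $\Phi(W^{(l)})\to s$, the operators $W^{(l)}$ are norm-bounded (their squares converge), so a subsequence converges weakly to some $W$; compactness of each $X_i$ makes $W\mapsto W^*X_iW$ weakly continuous, identifying the last $g$ coordinates of $s$ with $W^*X_iW$, while weak lower semicontinuity of $W\mapsto W^*W$ forces $W^*W\preceq M_0:=\lim(W^{(l)})^*W^{(l)}$. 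The same compactness shows that the slice $\widehat S\cap\{\tr M_0=1\}$ is a compact base for $\widehat S$ not containing the origin, whence $\mathrm{conv}(\widehat S)$ is closed. Since $(I_n,Y)$ is a limit of finite partial sums lying in $\mathrm{conv}(S)\subseteq\mathrm{conv}(\widehat S)$, we obtain $(I_n,Y)\in\mathrm{conv}(\widehat S)$. The conic Carath\'eodory theorem in $\cV\cong\bbR^D$ then writes $(I_n,Y)$ as a nonnegative combination of at most $D$ elements of $\widehat S$; absorbing the coefficients into the operators by homogeneity gives $(I_n,Y)=\sum_{k}(M_0^k,W_k^*X_1W_k,\dots)$ with $W_k^*W_k\preceq M_0^k$. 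Comparing first coordinates yields $\sum_kW_k^*W_k\preceq\sum_kM_0^k=I_n$, so stacking the $W_k$ (padding with zero blocks) into $W:\cH_n\to\cH^{m_n}$ with $m_n=D$ produces a contraction with $W^*(I_{m_n}\otimes X)W=Y$.

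The main obstacle is precisely this passage to the closed cone: the first coordinate $W^*W$ is only weakly lower semicontinuous, not weakly continuous, so mass can escape in the limit and an isometry degenerates into a contraction -- this is exactly what forces part (\ref{it:FiniteSumContraction}) to yield a contraction rather than an isometry. Compactness of $X$ is essential here, both to make the $X_i$-coordinates weakly continuous and to supply the compact base keeping $\mathrm{conv}(\widehat S)$ closed; without it the reduction to finitely many terms breaks down.

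For part (\ref{it:FiniteSumIso}) I would first shrink the finite-interior witness. The hypothesis provides $d$ and a unit vector $v=(v_1,\dots,v_d)\in\cH^d$ with $\sum_p\langle v_p,X_iv_p\rangle=0$ for all $i$, exhibiting $0\in\bbR^g$ as a convex combination of the points $\bigl(\langle v_p,X_iv_p\rangle/\|v_p\|^2\bigr)_i$ with weights $\|v_p\|^2$; classical Carath\'eodory in $\bbR^g$ lets me retain at most $g+1$ of them, and rescaling the survivors yields a unit vector $\tilde v\in\cH^{m_0}$, $m_0=g+1$, still satisfying $\tilde v^*(I_{m_0}\otimes X)\tilde v=0$, so $m_0$ depends only on $g$. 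Starting from the contraction $W$ of part (\ref{it:FiniteSumContraction}), set $D_0=I_n-W^*W\succeq0$ with spectral decomposition $\sum_{p=1}^n\mu_p\,f_pf_p^*$, and define $W':\cH_n\to\oplus_{p=1}^n\cH^{m_0}=\cH^{nm_0}$ by sending $f_p\mapsto\sqrt{\mu_p}\,\tilde v$ into the $p$-th copy of $\cH^{m_0}$. Orthogonality of the copies together with $\tilde v^*(I\otimes X)\tilde v=0$ gives, by a direct computation, $(W')^*W'=D_0$ and $(W')^*(I\otimes X)W'=0$. Then $T=\mathrm{col}(W,W'):\cH_n\to\cH^{m_n+nm_0}$ satisfies $T^*T=W^*W+D_0=I_n$ and $T^*(I\otimes X)T=Y+0=Y$, the desired isometry. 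The inclusion ``$\subseteq$'' in \eqref{eq:KxFiniteSum} is this construction, and ``$\supseteq$'' is immediate since any isometry into finitely many copies of $\cH$ embeds into $\cH^\infty$, which proves \eqref{eq:KxFiniteSum}.
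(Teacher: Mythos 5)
Your proof is correct, and for part (\ref{it:FiniteSumContraction}) it takes a genuinely different route from the paper. The paper's proof truncates $X$: since $X$ is compact, $Y^\ell = V^*(I_\cH \otimes P_\ell X P_\ell)V$ converges in norm to $Y$, each $Y^\ell$ is a contractive compression of copies of the finite-dimensional tuple $X^\ell = \iota_\ell^* X \iota_\ell$, and the resulting inclusion of free spectrahedra $\cD_{X^\ell} \subseteq \cD_{Y^\ell}$ lets it invoke the convex Positivstellensatz \cite[Theorem 1.1]{HKM12} to produce contractions $W_\ell : \cH_n \to \cH^{m_n}$ with a uniform bound $m_n = m_n(n,g)$; a weak-operator-topology limit (Lemma \ref{lem:XCompactConvergence}) then gives $W$. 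You bypass both the truncation and the citation: splitting the isometry $V$ into its components exhibits $(I_n,Y)$ as a convergent series in the cone $S$ of tuples $(W^*W, W^*X_1W,\dots,W^*X_gW)$, and conic Carath\'eodory in $\bbS(\cH_n)^{g+1}$ finishes, once you know $\mathrm{conv}(\overline{S})$ is closed. The analytic core is shared: your description of $\overline{S}$ (WOT-compactness of bounded sequences, weak continuity of $W \mapsto W^*X_iW$ when $X_i$ is compact, lower semicontinuity of $W \mapsto W^*W$) is precisely the content of the paper's Lemma \ref{lem:XCompactConvergence}, and the loss of mass in the first coordinate that you flag is exactly why both arguments can only produce a contraction at this stage. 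What your route buys is self-containedness and an explicit bound $m_n = (g+1)n^2$; what the paper's route buys is brevity, outsourcing the uniform bound to \cite{HKM12}. For part (\ref{it:FiniteSumIso}) your construction is essentially the paper's Lemma \ref{lem:ContractionToIso}: your column $T$ built from $W$ and a block $W'$ satisfying $(W')^*W' = I_n - W^*W$ and $(W')^*(I\otimes X)W' = 0$ is the paper's $T$ with blocks $Z_{0_n}(I_n - W^*W)^{1/2}$ and $W$; your preliminary Carath\'eodory reduction of the finite-interior witness to $m_0 \le g+1$ is a small but genuine refinement, since the paper's $m_0$ is in effect the witness dimension $d$ from the definition of finite interior (which depends on $X$), whereas yours literally depends only on $g$, as the statement asserts.
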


Before giving the proof of Proposition \ref{prop:FixedBoundOnXSize} we state two lemmas which will be useful in the proofs of Proposition \ref{prop:FixedBoundOnXSize} and Theorem \ref{theorem:KxClosedBoundedMatConvex}. The first lemma is a convergence argument, while the second lemma shows that, when $0$ is in the finite interior of $K_X$, contractive conjugation can be replaced with isometric conjugation.

\begin{lemma}
\label{lem:XCompactConvergence}
Let $m,n \in \bbN$ be positive integers. Let $Z \in \mathfrak{K}(\cH^m)^g$ be a $g$-tuple of compact self-adjoint operators on $\cH^m$ and let $\{W^\ell\}_{\ell=1}^\infty \subset B(\cH_n,\cH^m)$ be a sequence of contractions which converges in the weak operator topology on $B(\cH_n,\cH^m)$ to some operator $W \in B(\cH_n,\cH^m)$. Then the sequence $\{W_\ell^* Z W_\ell\}_{\ell=1}^\infty \subset \bbS(\cH_n)^g$ converges in norm to $W^* Z W \in \bbS(\cH_n)^g$.

\end{lemma}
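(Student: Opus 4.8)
The plan is to reduce the operator-norm convergence of the tuple $\{W_\ell^* Z W_\ell\}$ to a componentwise statement and then to exploit that the domain $\cH_n$ is finite dimensional together with the compactness of each $Z_i$. Since a tuple has only $g$ coordinates, it suffices to prove that $W_\ell^* Z_i W_\ell \to W^* Z_i W$ in norm for each fixed $i$, so I fix $i$ with $Z_i$ self-adjoint and compact on $\cH^m$.

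First I would record two preliminary facts. Each $W_\ell$ is a contraction, so its adjoint $W_\ell^*$ is a contraction, and the weak operator limit $W$ of a bounded sequence of contractions is again a contraction; hence $\|W_\ell^*\| \le 1$ and $\|W^*\| \le 1$ for all $\ell$. Second, because $\dim \cH_n = n < \infty$, weak operator convergence $W_\ell \to W$ is equivalent to $W_\ell h \to W h$ weakly in $\cH^m$ for every $h \in \cH_n$; in particular, fixing an orthonormal basis $e_1, \dots, e_n$ of $\cH_n$, the vectors $(W_\ell - W)e_j$ tend to $0$ weakly in $\cH^m$ for each $j$.

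The main estimate comes from the telescoping identity
\[
W_\ell^* Z_i W_\ell - W^* Z_i W = W_\ell^* Z_i (W_\ell - W) + (W_\ell - W)^* Z_i W.
\]
Using $\|W_\ell^*\| \le 1$, $\|W^*\| \le 1$, the self-adjointness of $Z_i$, and the fact that an operator and its adjoint have equal norm, both summands are bounded in norm by $\|Z_i (W_\ell - W)\|$, which yields
\[
\|W_\ell^* Z_i W_\ell - W^* Z_i W\| \le 2 \|Z_i (W_\ell - W)\|.
\]
Thus everything reduces to showing $\|Z_i(W_\ell - W)\| \to 0$. Since the domain is finite dimensional, I would bound $\|Z_i(W_\ell - W)\| \le \big(\sum_{j=1}^n \|Z_i(W_\ell - W)e_j\|^2\big)^{1/2}$ by expanding a unit vector of $\cH_n$ in the basis $\{e_j\}$ and applying Cauchy--Schwarz, so it is enough to prove $\|Z_i(W_\ell - W)e_j\| \to 0$ for each fixed $j$.

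The crux---and the only place compactness is used---is the fact that a compact operator sends weakly convergent sequences to norm convergent ones. Applying this with the bounded sequence $(W_\ell - W)e_j \to 0$ weakly and the compact operator $Z_i$ gives $Z_i(W_\ell - W)e_j \to 0$ in norm, which is exactly what remains; summing the finitely many components then completes the argument. I expect this last step to be the conceptual heart of the proof: without compactness of $Z_i$ one obtains only weak convergence of $Z_i(W_\ell - W)e_j$, which is insufficient for norm convergence of the compressions, so compactness is precisely what upgrades the weak convergence of $W_\ell$ to norm convergence of the conjugates.
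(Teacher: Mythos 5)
Your proof is correct and takes essentially the same route as the paper: the identical telescoping decomposition $W_\ell^* Z_i W_\ell - W^* Z_i W = W_\ell^* Z_i(W_\ell - W) + (W_\ell - W)^* Z_i W$, with the compactness of $Z_i$ used to upgrade weak convergence of $(W_\ell - W)h$ to norm convergence, and finite dimensionality of $\cH_n$ to assemble the componentwise limits. The only cosmetic difference is that the paper treats the cross term separately via strong convergence of $W_\ell^* \to W^*$ (valid since the range is finite dimensional) and argues with quadratic forms, whereas you fold both terms into the single estimate $\|W_\ell^* Z_i W_\ell - W^* Z_i W\| \le 2\|Z_i(W_\ell - W)\|$ using the adjoint and self-adjointness of $Z_i$ --- a slightly tidier packaging of the same ideas.
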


\begin{proof}
 Since $WZW$ and $W_\ell^* Z W_\ell$ for all $\ell$ are all tuples of finite dimensional operators it is sufficient to show that for each $i=1, \dots, g$ and for all $h \in \cH_n$ we have
\beq
\label{eq:ConvergenceInnerProduct}
\lim \|\langle (W_\ell^* Z_i W_\ell-W^* Z_i W)h,h \rangle\|=0.
\eeq

To this end, fix $h \in \cH_n$ and observe that
\beq
\label{eq:SplitSum}
\begin{array}{rclcl}
\lim \|\langle (W_\ell^* Z_i W_\ell-W^* Z W)h,h \rangle\| &\leq& \lim \|\langle W_\ell^* Z_i (W_\ell-W)h,h \rangle\|\\
&+&\lim \|\langle(W_\ell^*-W^*) Z_i W h,h \rangle\|
\end{array}
\eeq
Note that the sequence $\{W_\ell^*\}$ converges to $W^*$ in the strong operator topology since these operators map into a finite dimensional space. This implies that
\beq
\label{eq:StrongConvPart}
\lim \|\langle(W_\ell^*-W^*) Z_i W h,h \rangle\|\leq \|h\| \lim \|(W_\ell^*-W^*)\big(Z_i Wh\big)\|=0.
\eeq

To handle the remaining term in equation \eqref{eq:SplitSum} note that $Z_i W_\ell$ converges strongly to $Z_i W$ since $Z_i$ is compact. Also note that $\sup_\ell \|W_\ell^*\|\leq 1$ since the $W_\ell$ are contractions. Using these facts we have

\beq
\label{eq:WeakPlusCompactPart}
\lim \|\langle W_\ell^* Z_i (W_\ell-W)h,h \rangle\| \leq \|h\| \lim \|W_\ell^*\| \|\big(Z_i W_\ell-Z_i W\big) h\| =0.
\eeq
Combining equations \eqref{eq:StrongConvPart} and \eqref{eq:WeakPlusCompactPart} shows $\lim W_\ell^* Z_i W_\ell=W^* Z_i W$ for $i=1, \dots, g$.
\end{proof}

\begin{lemma}
\label{lem:ContractionToIso}
Let $X \in \mathfrak{K}(\cH)^g$ be a $g$-tuple of compact self-adjoint operators on $\cH$ and assume $0$ is in the finite interior of $K_X$. The there exists an integer $m_{0}$ depending only on $g$ such that, given any integers $m,n \in \bbN$ and any contraction $W:\cH_n \to \cH^m$, there exists an isometry $T: \cH_n \to \cH^{m+nm_0}$ such that
\[
W^*(I_m \otimes X) W=T^* (I_{m+nm_0} \otimes X ) T.
\]
\end{lemma}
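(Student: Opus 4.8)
The plan is to build $T$ by stacking $W$ on top of an auxiliary block $S$ whose only job is to absorb the defect $I_n - W^*W$ while contributing nothing to the evaluation of $X$. Since $W$ is a contraction, $D := (I_n - W^*W)^{1/2}$ is a well-defined positive operator on $\cH_n$, and I would look for a map $S : \cH_n \to \cH^{nm_0}$ satisfying $S^*S = D^2$ together with $S^*(I_{nm_0}\otimes X)S = 0$. With such an $S$ in hand, the operator $T = \begin{pmatrix} W \\ S \end{pmatrix}: \cH_n \to \cH^{m+nm_0}$ automatically satisfies $T^*T = W^*W + D^2 = I_n$, so $T$ is an isometry, and $T^*(I_{m+nm_0}\otimes X)T = W^*(I_m\otimes X)W + 0$, which is exactly the desired identity.

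The first step is to pin down $m_0$ and the building block for $S$. The finite-interior hypothesis supplies a unit vector $w \in \cH^d$ with $w^*(I_d\otimes X)w = 0$; writing $w = (w_1,\dots,w_d)$, this says $0 = \sum_k \langle X_i w_k, w_k\rangle$ for each $i$, which means precisely that $0$ lies in the convex hull of the joint numerical range $\{(\langle X_i u,u\rangle)_{i=1}^g : u \in \cH,\ \|u\|=1\} \subset \R^g$. By Carath\'eodory's theorem $0$ is then a convex combination of at most $g+1$ points of this set, and assembling the corresponding unit vectors (weighted by the square roots of the coefficients) yields a unit vector $v \in \cH^{g+1}$ with $v^*(I_{g+1}\otimes X)v = 0$. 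This permits the choice $m_0 = g+1$, which depends only on $g$; indeed this reduction is exactly what forces $m_0$ to be independent of $X$.

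Next I would construct $S$ from $v$ and $D$. Identifying $\cH^{nm_0}$ with $\C^n \otimes \cH^{m_0}$ by grouping the $nm_0$ copies of $\cH$ into $n$ blocks of $m_0$, the ampliation factors as $I_{nm_0}\otimes X = I_n \otimes (I_{m_0}\otimes X)$. Viewing $v$ as an isometry $\C \to \cH^{m_0}$, set $S = (I_n \otimes v)\,D : \cH_n \to \C^n \otimes \cH^{m_0} = \cH^{nm_0}$. Since $v^*v = 1$ we get $S^*S = D(I_n \otimes v^*v)D = D^2 = I_n - W^*W$, and for each $i$, $S^*(I_{nm_0}\otimes X_i)S = D\,\bigl(I_n \otimes v^*(I_{m_0}\otimes X_i)v\bigr)\,D = 0$ because $v^*(I_{m_0}\otimes X_i)v = 0$. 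Stacking $W$ over this $S$ then gives the isometry $T$ with the required properties.

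I do not anticipate a serious obstacle: the whole content is the observation that a single finite-interior witness $v$ can be tensored up to fill the defect of an arbitrary contraction without perturbing the value of $X$. The only points that genuinely need care are the copy-counting in the tensor identification $\cH^{nm_0} \cong \C^n \otimes \cH^{m_0}$ and the invocation of Carath\'eodory, which is what keeps $m_0$ a function of $g$ alone rather than of $X$.
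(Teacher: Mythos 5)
Your proposal is correct, and the core construction is the same as the paper's: the paper also forms $T$ by stacking $W$ with a block $Z_{0_n}(I_n - W^*W)^{1/2}$, where $Z_{0_n}:\cH_n \to \cH^{nm_0}$ is an isometry annihilating $I_{nm_0}\otimes X$ obtained by amplifying the finite-interior witness, exactly your $S = (I_n\otimes v)D$. The one place you go beyond the paper is the Carath\'eodory step: the paper simply takes $m_0 = d$ from the definition of finite interior and asserts at the end that ``the integer $m_0$ depends only on $g$,'' even though the $d$ in that definition is a priori allowed to depend on $X$. Your observation that $0$ lies in the convex hull of the joint numerical range, so that Carath\'eodory in $\R^g$ lets one reassemble a witness in $\cH^{g+1}$, is precisely the missing justification for that assertion, and it pins $m_0 = g+1$ uniformly in $X$. (For the paper's later use --- closedness of $K_X(n)$ for a fixed $X$ --- a witness size depending on $X$ would actually suffice, but your argument makes the stated dependence on $g$ alone honest.)
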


\begin{proof}

By assumption $0$ is in the finite interior of $K_X$. It follows that there is an integer $m_0 \in \bbN$ and an isometry $Z_0: \cH_1 \to \cH^{m_0}$ such that $Z_0^* (I_{m_0} \otimes X) Z_0=0$. This implies that for each $n$ there is an isometry $Z_{0_n} : \cH_n \to \cH^{nm_{0}}$ such that $0_n=Z_{0_n}^* (I_{nm_{0}} \otimes X) Z_{0_n}$. Define $T:\cH_n \to \cH^{m+nm_{0}}$ by
\beq
T=\begin{pmatrix}
Z_{0_n} (I_n-W^*W)^\frac{1}{2}\\
W
\end{pmatrix}.
\eeq
Then $T$ is an isometry and $T^* (I_{m+nm_{0}} \otimes X) T=W^*(I_m \otimes X) W$, and the integer $m_{0}$ depends only on $g$.
\end{proof}

We now prove Proposition \ref{prop:FixedBoundOnXSize}.

\begin{proof}
Given $Y=V^* (I_\cH \otimes X) V \in K_X (n)$ where $V: \cH_n \to \cH^\infty$ is an isometry, let $P_\ell: \cH \to \cH$ be the orthogonal projection of $\cH$ onto $\cH_\ell$. Since $X$ is compact, the sequence $P_\ell X P_\ell$ converges to $X$ in norm, and $I_\cH \otimes P_\ell X P_\ell$ converges to $I_\cH \otimes X$ in norm. Defining
\beq
\label{eq:YellDef}
Y^\ell=V^*(I_\cH \otimes P_\ell X P_\ell) V
\eeq
it follows that $\lim Y^\ell =Y \in \bbS(\cH_n)^g$.

Observe that
\[
Y^\ell=((I_{\cH} \otimes \iota^*_\ell)V)^*(I_{\cH} \otimes (\iota_\ell^* X \iota_\ell))((I_{\cH} \otimes \iota^*_\ell)V)
\]
for all $\ell$ where $\iota_\ell: \cH_\ell \to \cH$ is the inclusion map. Defining the quantities
\[
X^\ell=\iota_\ell^* X \iota_\ell \in B(\cH_\ell)^g \quad \quad \mathrm{and} \quad \quad V_\ell=((I_{\cH} \otimes \iota_\ell^*)V) \in B(\cH_n, \cH_\ell^\infty)
\]
we have $V_\ell^* (I_\cH \otimes X^\ell) V_\ell=Y^\ell$ and $V_\ell$ is a contraction for all $\ell$.

Fix $d,\ell \in \bbN$ and a $g$-tuple $A \in \bbS(\cH_d)^g$. It is straight forward to show
\[
L_{Y^\ell} (A) \succeq  (V_\ell^* \otimes I_d) \big(I_\cH \otimes L_{X^\ell} (A)\big)(V_\ell^* \otimes I_d).
\]
It follows that $L_{Y^\ell}(A) \succeq 0$ if $L_{X^\ell} (A) \succeq 0$. Therefore $\cD_{X^\ell} \subseteq \cD_{Y^\ell}$ for all $\ell \in \bbN$.

Using \cite[Theorem 1.1]{HKM12} we conclude that there is an integer $m_n$ depending only on $n$ and $g$ such that for each $\ell \in \bbN$ there is contraction $W_\ell':\cH_n \to \cH_\ell^{m_n}$ such that
\beq
(W_\ell')^*(I_{m_n} \otimes X^\ell) W_\ell'=Y^\ell.
\eeq
For each $\ell$ define the operator $W_\ell: \cH_n \to \cH^{m_n}$ by
\beq
W_\ell=(I_{m_n} \otimes \iota_\ell)W_\ell'.
\eeq
Then each $W_\ell$ is a contraction and
\[
Y^\ell=W_\ell^*(I_{m_n} \otimes X) W_\ell
\]
for all $\ell \in \bbN$.

By passing to a subsequence if necessary, we may assume that the $W_\ell$ converge to some contraction $W:\cH_n \to \cH^{m_n}$ in the weak operator topology on $B(\cH_n, \cH^{m_n})$. By assumption $X$ is compact, so $I_{m_n} \otimes X$ is compact. Using Lemma \ref{lem:XCompactConvergence}, it follows that
\[
W^* (I_{m_n} \otimes X) W=\lim W^*_\ell (I_{m_n} \otimes X) W_\ell.
\]
It follows that $Y=W^* (I_{m_n} \otimes X) W$, which completes the proof of item \eqref{it:FiniteSumContraction}.

Item \eqref{it:FiniteSumIso} is immediate from Lemma \ref{lem:ContractionToIso} and the assumption that $0$ is in the finite interior of $K_X$.

\end{proof}

We now prove $K_X$ is a closed bounded matrix convex set.

\begin{theorem}
\label{theorem:KxClosedBoundedMatConvex}
Let $X\in \mathfrak{K}(\cH)^g$ be a $g$-tuple of self-adjoint compact operators on $\cH$ and let $K_X$ be the noncommutative convex hull of $X$. Assume that $0$ is in the finite interior of $K_X$. Then $K_X$ is a closed bounded matrix convex set.
\end{theorem}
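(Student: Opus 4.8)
The plan is to observe that boundedness and matrix convexity are already in hand from Lemma~\ref{lem:KxBoundedMatConvex}, so the entire new content of the theorem is the assertion that $K_X$ is closed, i.e.\ that $K_X(n)$ is a closed subset of $\bbS(\cH_n)^g$ for each fixed $n \in \bbN$. The engine driving the argument is the second part of Proposition~\ref{prop:FixedBoundOnXSize}: it furnishes an integer $M := m_n + n m_0$, depending only on $n$ and $g$ and in particular \emph{independent of the point} $Y \in K_X(n)$, so that every element of $K_X(n)$ is a compression of the single \emph{compact} tuple $I_M \otimes X$ by an isometry $\cH_n \to \cH^M$. This uniform bound on the ambient dimension is exactly what converts the convergence Lemma~\ref{lem:XCompactConvergence} into a closedness statement.

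Concretely, I would fix $n$, take a sequence $Y^j \in K_X(n)$ with $Y^j \to Y$ in norm for some $Y \in \bbS(\cH_n)^g$, and use the second part of Proposition~\ref{prop:FixedBoundOnXSize} to write
\[
Y^j = T_j^*\,(I_M \otimes X)\,T_j, \qquad T_j : \cH_n \to \cH^M \text{ an isometry,}
\]
with $M = m_n + n m_0$ the same for all $j$. Since $\cH_n$ is finite dimensional and $\cH^M$ is a separable Hilbert space, each isometry is determined by the $n$ images of a fixed orthonormal basis of $\cH_n$, and these images form bounded sequences in $\cH^M$. A diagonal weak-sequential-compactness argument (equivalently Banach--Alaoglu applied to the bounded set of operators $\cH_n \to \cH^M$) then yields a subsequence along which $T_j$ converges in the weak operator topology to some operator $T : \cH_n \to \cH^M$, and $T$ is a contraction because weak-operator limits of contractions are contractions. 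Because $X$ is compact, $I_M \otimes X$ is a compact tuple on $\cH^M$, so Lemma~\ref{lem:XCompactConvergence} applies with $Z = I_M \otimes X$ and gives $T_j^*(I_M \otimes X)T_j \to T^*(I_M \otimes X)T$ in norm; comparing with $Y^j \to Y$ forces $Y = T^*(I_M \otimes X)T$.

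It remains only to upgrade the contraction $T$ to an isometry so that $Y$ lands back in $K_X(n)$. Here I would invoke Lemma~\ref{lem:ContractionToIso}, which (using that $0$ is in the finite interior of $K_X$) replaces $T$ by an isometry $\widetilde T : \cH_n \to \cH^{M + n m_0}$ with $T^*(I_M \otimes X)T = \widetilde T^*(I_{M + n m_0} \otimes X)\widetilde T$; composing $\widetilde T$ with the canonical inclusion $\cH^{M + n m_0} \hookrightarrow \cH^\infty$ exhibits $Y$ as a compression of infinitely many copies of $X$ by an isometry, i.e.\ $Y \in K_X(n)$. The main obstacle, already resolved upstream by Proposition~\ref{prop:FixedBoundOnXSize}, is precisely the need for the \emph{uniform} dimension bound $M$: had the isometries been forced into $\cH^\infty$ with no control, the relevant infinite direct sum of copies of $X$ would no longer be compact and the weak-operator limit could leak mass, so that the compression of the limit might fail to be the limit of the compressions. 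With $M$ fixed and finite, the compactness of $I_M \otimes X$ rescues the argument, and the contraction-to-isometry step is then routine.
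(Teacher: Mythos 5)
Your proposal is correct and follows essentially the same route as the paper: a uniform dimension bound from Proposition~\ref{prop:FixedBoundOnXSize}, weak-operator compactness to extract a limiting contraction, Lemma~\ref{lem:XCompactConvergence} to pass the limit through the compression, and Lemma~\ref{lem:ContractionToIso} to restore an isometry. The only (immaterial) difference is that you start from the isometric representation in item~(2) of Proposition~\ref{prop:FixedBoundOnXSize} and let the weak limit degrade to a contraction, whereas the paper works with the contractions of item~(1) from the outset; both arguments then conclude identically.
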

\begin{proof}
Lemma \ref{lem:KxBoundedMatConvex} shows that $K_X$ is bounded and matrix convex. It remains to show that $K_X (n)$ is closed for each $n$. Let $\{Y^\ell\} \subset K_X (n)$ be a sequence of elements of $K_X (n)$ converging to some $g$-tuple $Y \in \bbS(\cH_n)^g$. By Proposition \ref{prop:FixedBoundOnXSize} there exists a fixed integer $m_n$ depending only on $n$ and $g$ and contractions $W_\ell:\cH_n \to \cH^{m_n}$ such that $W_\ell^* (I_{m_n} \otimes X) W_\ell=Y^\ell$ for all $\ell$. By passing to a subsequence if necessary, we can assume that the $W_\ell$ converge in the weak operator topology to some contraction $W:\cH_n \to \cH^{m_n}$. By assumption $X$ and $I_{m_n} \otimes X$ are compact so Lemma \ref{lem:XCompactConvergence} shows that
\[
Y=W^* (I_{m_n} \otimes X) W.
\]

Furthermore, we assumed $0$ is in the finite interior of $K_X$, so using Lemma \ref{lem:ContractionToIso} there exists an integer $m_{0}$ depending only on $g$ and an isometry $T:\cH_n \to \cH^{m_n+nm_{0}}$ such that
\[
T^* X T=W^*X W=Y.
\]
We conclude $Y \in K_X (n)$ and $K_X (n)$ is closed.
\end{proof}

\section{Matrix Affine Maps}
\label{section:MatrixAffine}

We begin this section by introducing and briefly discussing the notion of matrix affine maps on a matrix convex set. We direct the reader to \cite[Section 3]{WW99} for a more detailed discussion of matrix affine maps.

Let $K \subset \cup_n \bbS(\cH_n)^g$ be a closed bounded matrix convex set. A \df{matrix affine map} on $K$ is a sequence $\theta=(\theta_n)_n$ of mappings $\theta_n:K(n) \to M_n(W)$ for some vector space $W$, such that
\beq
\label{eq:MatrixAffineDef}
\theta_n \left(\sum_{\ell=1}^k V_\ell^* Y^\ell V_\ell\right)=\sum_{\ell=1}^k V_\ell^* \theta_{n_\ell} (Y^\ell) V_\ell,
\eeq
for all $Y^\ell \in K(n_\ell)$ and $V_\ell \in B(\cH_{n_\ell},M_n (W))$ for $\ell=1, \dots, k$ satisfying $\sum_{\ell=1}^k V_\ell^* V_\ell=I_n$. If each $\theta_n$ is a homeomorphism, then we will say $\theta$ is a \df{matrix affine homeomorphism}. Given a matrix convex set $K$ we will let
\[
\cR_K=\{\theta=(\theta_n)_n|\ \theta_n:K(n) \to B(\cH_n)\ \mathrm{\ for \ all\ } n \in \bbN \mathrm{\ and \ } \theta \mathrm{\ is \ matrix \ affine}\}
\] denote the set of matrix affine maps on $K$ sending $K(n)$ into $B(\cH_n)$. As an example, if $A \in B(\cH_d)^{g+1}$ is a $g+1$-tuple of operators on $\cH_d$, then the linear pencil $L_A$, i.e. the map $y \mapsto L_A(y)$, is an element of $M_d (\cR_K)$.

In \cite[Proposition 3.5]{WW99}, Webster and Winkler show  that the set $\cR_K$ is an operator system if $K \subset \cup_n \bbS(\cH_n)^g$ is a closed bounded matrix convex set. Given a positive integer $d$, the positive cone in $M_d(\cR_K)$ is defined by $\theta \in M_d (\cR_K)^+$ if and only if $\theta(Y) \succeq 0$ for all $Y \in K$.

Additionally, \cite[Proposition 3.5]{WW99} shows that, with these assumptions, $K$ is matrix affinely homeomorphic to $CS(\cR_K)$. In particular the map sending $Y\in K(n)$ to  $\phi_Y \in CS_n (\cR_K)$ where $\phi_Y$ is defined by
\beq
\label{eq:phiYdef}
\phi_Y(\theta)=\theta_n(Y) \mathrm{\ for\ all\ } \theta \in \cR_K
\eeq
 defines a matrix affine homeomorphism from $K$ to $CS(\cR_K)$. The identification between the matrix convex set $K$ and $CS(\cR_K)$ is strengthened by \cite[Theorem 4.2]{KLS14} where Kleski shows that $Y$ is an absolute extreme point of $K$ if and only if $\phi_Y \in CS(\cR_K)$ is an irreducible maximal completely positive map on $\cR_K$.

\subsection{Matrix Affine Maps on $K$ are Affine}

Webster and Winkler \cite{WW99} comment that, if all the elements of $K(1)$ are self-adjoint, as is the case in our setting, then the set of matrix affine maps on $K$ is equivalent to the set of affine maps on $K$. \cite{WW99} does not give a proof of this fact, as they do not use it, so for the reader's convenience we provide a proof here.

\begin{prop}
\label{prop:AffineArePencils}
Let $K$ be a closed bounded matrix convex set and assume $0 \in K$. Fix a $d \in \bbN$ and let $\theta \in M_d(\cR_K)$. Then there exists an
\[
A=(A_0, A_1, \dots, A_g) \in B(\cH_d)^{g+1}
\]
such that $\theta(Y)=L_A(Y)$ for all $Y \in K$.
\end{prop}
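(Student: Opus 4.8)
The plan is to extract the pencil coefficients from the restriction of $\theta$ to level $1$, and then to show that matrix affineness forces the identity $\theta_n = L_A$ at every level by compressing down to level $1$ along unit vectors.

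First I would examine $\theta_1 : K(1) \to M_d(B(\cH_1)) = M_d$. Since $\dim \cH_1 = 1$, the set $K(1)$ is an ordinary convex subset of $\bbS(\cH_1)^g \cong \R^g$, and the matrix affine identity \eqref{eq:MatrixAffineDef} at target level $n=1$ with level-$1$ inputs and scalar coefficients $v_\ell$ satisfying $\sum_\ell |v_\ell|^2 = 1$ reduces to $\theta_1(\sum_\ell |v_\ell|^2 y^\ell) = \sum_\ell |v_\ell|^2 \theta_1(y^\ell)$; that is, $\theta_1$ is an ordinary affine map, which is the content of the Webster--Winkler remark quoted above. Since $0 \in K(1)$, I set $A_0 := \theta_1(0)$ and note that $y \mapsto \theta_1(y) - A_0$ is linear on the linear span of $K(1)$; extending this linear map arbitrarily to all of $\R^g$ and writing it as $y \mapsto -\sum_{k=1}^g A_k y_k$ produces operators $A_k \in B(\cH_d)$ with $\theta_1(y) = L_A(y)$ for every $y \in K(1)$. (If $K(1)$ fails to span $\R^g$ the $A_k$ are not unique, but any choice will do.)

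Next, since the linear pencil $L_A$ is itself matrix affine, i.e. $L_A \in M_d(\cR_K)$ as noted in the text, the difference $\Psi := \theta - L_A$ is again a matrix affine map, and by construction $\Psi_1 \equiv 0$ on $K(1)$. The key step is then a compression argument. Fix $n$, a tuple $Y \in K(n)$, and a unit vector $v \in \cH_n$, regarded as an isometry $v : \cH_1 \to \cH_n$. By matrix convexity the compression $v^* Y v$ lies in $K(1)$, and applying \eqref{eq:MatrixAffineDef} to the single-term combination $v^* Y v$ (target level $1$) gives
\[
(I_d \otimes v^*)\, \Psi_n(Y)\, (I_d \otimes v) = \Psi_1(v^* Y v) = 0 .
\]
Thus every $M_d$-block $[\Psi_n(Y)]_{ij} \in B(\cH_n)$ of $\Psi_n(Y)$ satisfies $v^* [\Psi_n(Y)]_{ij} v = 0$ for all unit vectors $v \in \cH_n$. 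Because $\cH_n$ is a complex Hilbert space, an operator whose numerical range is $\{0\}$ is itself $0$, so every block vanishes and hence $\Psi_n(Y) = 0$, i.e. $\theta_n(Y) = L_A(Y)$. As $n$ and $Y \in K(n)$ were arbitrary, this proves $\theta = L_A$ on all of $K$.

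The routine parts are the affine-extension bookkeeping in the second paragraph and the verification that $L_A$ is matrix affine. I expect the main point to be the compression identity $(I_d \otimes v^*)\Psi_n(Y)(I_d \otimes v) = \Psi_1(v^*Yv)$, which collapses the entire problem to level $1$, together with its companion observation that vanishing of the numerical range of each block over $\C$ forces the block to vanish; this last point is exactly where the complex structure of the ambient Hilbert spaces is essential (the analogous real statement would fail).
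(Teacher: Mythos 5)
Your proof is correct, and its engine is the same as the paper's: the unit-vector compression identity $\theta_1(v^*Yv)=(I_d\otimes v^*)\,\theta_n(Y)\,(I_d\otimes v)$ (which the paper states as $\langle \theta_n(Y)\zeta,\zeta\rangle=\theta_1(\zeta^*Y\zeta)$), combined with the fact that an affine map on the convex set $K(1)\ni 0$ is the restriction of a pencil. Where you genuinely diverge is in the organization, and your version is leaner in two respects. First, the paper spends the bulk of its proof showing that $\psi_n(Y)=\theta_n(Y)-\theta_n(0_n)$ is linear at \emph{every} level $n$, via explicit direct-sum and isometry manipulations; you observe that only level-$1$ affineness is needed, since the compression identity then propagates the level-$1$ formula to all levels. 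Second, the paper first reduces to self-adjoint $\theta$ (writing $\theta=\theta^1+i\theta^2$) precisely so that $\theta_n(Y)$ is determined by its quadratic form; you avoid that decomposition by passing to the difference $\Psi=\theta-L_A$ and invoking the complex polarization fact that an operator with identically vanishing numerical range is zero --- the same underlying fact, but applied to general rather than self-adjoint operators, and you correctly flag that this is exactly where the complex structure is essential. A third, minor difference is that you work in $M_d(\cR_K)$ from the start, whereas the paper treats $d=1$ and passes to $M_d$ entrywise at the end. Both arguments leave the same routine step implicit (that a convex-combination-preserving map on a convex set containing $0$ extends to an affine map on the ambient space), so your proposal has no gap that the paper's own proof does not share.
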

\begin{proof} Note that all the elements of $K$ are self-adjoint, so if $Y \in K$ and $\alpha \in \C$ satisfy $\alpha Y \in K$ then $\alpha \in \R$. Therefore, to show $\theta \in \cR_K$ is affine it is sufficient to show $\theta$ is affine over the reals. Temporarily assume $\theta \in \cR_K$ is self-adjoint. Define the map $\psi=(\psi_n)_n$ by
\[
\psi_n(Y)=\theta_n(Y) -\theta_n (0_n) \mathrm{\ for \ all \ } Y \in K (n) \mathrm{\  and\ all\ } n \in \bbN.
\]
We will show that $\psi$ is linear over $\R$ for each $n$.

Fix $n$ and a $Y \in K(n)$ and let $\alpha \in [0,1]$. Since $0 \in K$ and matrix convex sets are closed under taking direct sums we have $Y \oplus 0_n \in K (2n)$. Let $V: \C^n \to \C^{2n}$ be the isometry
\[
V=
\begin{pmatrix}
\sqrt{\alpha} I_n \\
\sqrt{1-\alpha} I_n
\end{pmatrix}
\]
and set $V_1=\sqrt{\alpha} I_n$ and $V_2=\sqrt{1-\alpha} I_n$. Then we have
\[
\begin{array}{rclcl}
\alpha \psi_n (Y)&=& \alpha \theta_n (Y)+(1-\alpha) \theta_n (0)-\theta_n(0) \\
&=& V_1^* \theta_n (Y) V_1+V_2^* \theta_n (0) V_2 -\theta_n (0) \\
&=& \theta_{n} (V^* (Y \oplus 0) V)-\theta_n (0) = \psi_n (\alpha Y).
\end{array}
\]

 Now let $\alpha>1$ and assume $\alpha Y \in K (n)$. Then $\frac{1}{\alpha} \in [0,1]$ so it follows that
$\frac{1}{\alpha} \psi_n (\alpha Y)= \psi_n (Y).$ This shows
\[
\psi_n (\alpha Y)=\alpha \psi_n (Y)
\]
for all $\alpha \geq 0$ satisfying $\alpha Y \in K$.

We now show that, given $Y^1, Y^2 \in K(n)$ such that $Y^1+Y^2 \in K(n),$ we have
\[
\psi_n(Y^1+Y^2)=\psi_n(Y^1+Y^2).
\]
To this end, set
\[
V=\begin{pmatrix}
\frac{1}{\sqrt{2}} I_n \\
\frac{1}{\sqrt{2}} I_n
\end{pmatrix}.
\]
Since $\psi$ is matrix affine we have
\[
\begin{array}{rclcl}
\psi_n (\frac{1}{2} Y^1)+\psi_n(\frac{1}{2} Y^2) &=& \frac{1}{2} \psi_n (Y^1)+\frac{1}{2} \psi_n (Y^2) \\
&=& V_1^* \psi_n (Y^1) V_1+V_2^* \psi_n(Y^2) V_2 \\
&=& \psi_{2n} (V^* (Y^1 \oplus Y^2) V) \\
&=& \psi_{n} (\frac{1}{2} Y^1+\frac{1}{2} Y^2).
\end{array}
\]
By assumption $Y^1+Y^2 \in K$ so we find
\[
\psi_n (Y^1)+\psi_n(Y^2)=2\left(\psi_n (\frac{1}{2} Y^1)+\psi_n(\frac{1}{2} Y^2) \right)
=2\psi_n (\frac{1}{2} Y^1+\frac{1}{2} Y^2)
= \psi_n (Y^1+Y^2).
\]
Additionally, given $Y \in K(n)$ such that $-Y \in K$ we have
\[
0_n=\psi(0_n)=\psi_n(Y-Y)=\psi_n(Y)+\psi_n(-Y).
\]
Therefore $\psi_n(-Y)=-\psi_n(Y)$. We conclude that $\psi_n$ is linear for each $n$ and $\theta_n$ is affine for each $n$.

Now recall that we are dealing with self-adjoint $\theta$. In particular $\theta_1$ is affine and self-adjoint, so there exists a $g+1$-tuple $(\alpha_0, \alpha_1, \dots, \alpha_g) \in \R^{g+1}$ such that $\theta_1 (Y)=\alpha_0-\sum_{i=1}^g \alpha_i Y_i$ for all $Y \in K (1)$. Since $\theta$ is self-adjoint and matrix affine, each $\theta_n$ is determined by the equality
\[
\langle \theta_n (Y) \zeta,\zeta \rangle=\theta_1(\zeta^* Y \zeta)
\]
for all $Y \in K (n)$ and all unit vectors $\zeta \in \C^n$ and all $n$. It follows that
\[
\theta_n (Y)=\alpha_0 I_n-\sum_{i=1}^g \alpha_i Y_i
\]
for all $Y \in K (n)$ and all $n$.

Now if $\theta \in \cR_K$ is not self-adjoint then $\theta$ can be written $\theta=\theta^1+i \theta^2$ where $\theta^1$ and $\theta^2$ are self-adjoint. It follows from above that there is a $g+1$ tuple $(\alpha_0, \alpha_1, \dots, \alpha_g) \in \C^{g+1}$ such that
\beq
\label{eq:FAffine}
\theta_n (Y)=\alpha_0 I_n-\sum_{i=1}^g \alpha_i Y_i
\eeq
for all $Y \in K (n)$ and all $n$.

It immediately follows that if $\theta \in M_d (\cR_K)$ then there exists a tuple $A=(A_0,A_1, \dots, A_g) \in M_d(\C^{g+1})$ such that
\[
\theta(Y)=A_0 \otimes I_n - \sum_{i=1}^g A_i \otimes Y_i=L_A(Y)
\]
 for all $Y \in K (n)$ and all $n$. Identifying $M_d(\C^{g+1})$ with $B(\cH_d)^{g+1}$ for each $d$ completes the proof.

\end{proof}

In light of Proposition \ref{prop:AffineArePencils}, if $0 \in K$, then for a fixed $d \in \bbN$ we have
\beq
\label{eq:AffineSetIsPencilsSet}
M_d(\cR_K)=\{L_A:K \to M_d(K)| \ A \in B(\cH_d)^{g+1}\}
\eeq
where $L_A$ is the map $y \mapsto L_A(y).$

\begin{remark}
As an aside for the reader interested in polar duals, we note that Proposition \ref{prop:AffineArePencils} points towards a strong relationship between positive cone in $\cup_d M_d(\cR_K)$ and $K^\circ$, the polar dual of $K$. In particular, given a tuple $A=(A_0, A_1, \dots, A_g) \in B(\cH_d)^{g+1}$ it can be shown that $L_A \in M_d(\cR_K)^+$ if and only if there exists a tuple $\tilde{A} \in K^\circ (m)$ for some $m \leq d$ and a positive definite operator $\tilde{A}_0 \in B(\cH_m)$ such that $A$ is unitarily equivalent to the tuple
\[
(\tilde{A}_0 \oplus 0_{d-m}, -\tilde{A}_{0}^{1/2} \tilde{A}_1 \tilde{A}_0^{1/2} \oplus 0_{d-m}, \dots, -\tilde{A}_{0}^{1/2} \tilde{A}_g \tilde{A}_0^{1/2}\oplus 0_{d-m}).
\]
We omit the proof of this fact as we will not make use of the fine structure of the positive cone in $\cup_d M_d(\cR_K)$. See \cite{HKM17} for a general discussion of polar duals and \cite{EHKM16} for a discussion of the extreme points of polar duals of a free spectrahedra.
\end{remark}

\subsection{Maps on $\cR_K$}

Given a Hilbert space $\cM$ and a g-$tuple$ of operators $Z \in B(\cM)^g$ we define the map $\phi_Z:\cR_K \to B(\cM)$ by
\beq
\label{eq:phiZdefPencil}
\phi_Z(L_A)=L_A(Z) \mathrm{ \ for\ all\ pencils\ } L_A \in \cR_K.
\eeq

We are particularly interested in the case where $K=K_X$ for some g-tuple of self-adjoint compact operators $X \in \ktuples$. The following proposition shows that the map $\phi_X:\cR_{K_X}\to B(\cH)$ is a unital completely positive map on $\cR_{K_X}$ when $0$ is in the finite interior of $K_X$.

\begin{prop}
\label{prop:phiXisCP}

Let $X \in \mathfrak{K}(\cH)^g$ and assume $0$ is in  the finite interior of $K_X$. Then $\phi_X: \cR_{K_X}\to B(\cH)$ as defined by equation \eqref{eq:phiZdefPencil} is a unital completely positive map on $\cR_{K_X}$.
\end{prop}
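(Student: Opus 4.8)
The plan is to reduce the statement to a single positivity assertion about the (infinite dimensional) tuple $X$ and then to bridge from the finite dimensional members of $K_X$ to $X$ itself by a compression argument.

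\smallskip

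First I would record that $0\in K_X$. Indeed, the hypothesis that $0$ is in the finite interior of $K_X$ supplies an integer $d$ and a unit vector $v\in\cH^d$ with $v^*(I_d\otimes X)v=0\in\R^g$; viewing $v$ as an isometry $\cH_1\to\cH^\infty$ (and $I_d\otimes X$ as the restriction of $I_\cH\otimes X$ to the first $d$ copies of $\cH$) this says $0_1\in K_X(1)$, and since $K_X$ is closed under direct sums, $0_n\in K_X(n)$ for every $n$. This allows me to invoke Proposition \ref{prop:AffineArePencils} and its consequence \eqref{eq:AffineSetIsPencilsSet}: for each $d$, every element of $M_d(\cR_{K_X})$ is a pencil $L_A$ with $A\in B(\cH_d)^{g+1}$, and by definition $L_A\in M_d(\cR_{K_X})^+$ exactly when $L_A(Y)\succeq 0$ for all $Y\in K_X$. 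In this language $\phi_X^{(d)}(L_A)=L_A(X)$, an operator on $\cH^d=\cH_d\otimes\cH$; the order unit of $\cR_{K_X}$ is the constant pencil $y\mapsto I$, which $\phi_X$ sends to $I_\cH$, so $\phi_X$ is unital, and it is manifestly linear (well-definedness being a special case of the limiting argument below). It therefore remains to show, for each $d$, that $L_A(Y)\succeq 0$ for all $Y\in K_X$ forces $L_A(X)\succeq 0$.

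\smallskip

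For the bridge I would use the finite compressions $X^\ell=\iota_\ell^*X\iota_\ell\in\bbS(\cH_\ell)^g$, where $\iota_\ell:\cH_\ell\to\cH$ is the inclusion. Embedding $\cH_\ell$ into a single copy of $\cH$ inside $\cH^\infty$ exhibits $X^\ell$ as $V^*(I_\cH\otimes X)V$ for an isometry $V$, so $X^\ell\in K_X(\ell)$ and hence $L_A(X^\ell)\succeq 0$ by hypothesis. Writing $P_\ell$ for the orthogonal projection of $\cH$ onto $\cH_\ell$ and $Q_\ell=I_{\cH_d}\otimes P_\ell=(I_{\cH_d}\otimes\iota_\ell)(I_{\cH_d}\otimes\iota_\ell)^*$, a direct computation using $\iota_\ell^*\iota_\ell=I_{\cH_\ell}$ gives $L_A(X^\ell)=(I_{\cH_d}\otimes\iota_\ell)^*\,L_A(X)\,(I_{\cH_d}\otimes\iota_\ell)$, so that $Q_\ell L_A(X)Q_\ell=(I_{\cH_d}\otimes\iota_\ell)\,L_A(X^\ell)\,(I_{\cH_d}\otimes\iota_\ell)^*\succeq 0$ for every $\ell$.

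\smallskip

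Finally, since $\cH=\overline{\cup_\ell\cH_\ell}$ we have $P_\ell\to I_\cH$, hence $Q_\ell\to I_{\cH^d}$, in the strong operator topology; as $L_A(X)$ is bounded this yields $Q_\ell L_A(X)Q_\ell\to L_A(X)$ strongly, whence $\langle L_A(X)\xi,\xi\rangle=\lim_\ell\langle Q_\ell L_A(X)Q_\ell\,\xi,\xi\rangle\geq 0$ for every $\xi$, and so $L_A(X)\succeq 0$. The main obstacle is exactly this passage from positivity on the finite dimensional elements of $K_X$ (where the order structure of $\cR_{K_X}$ is defined) to positivity of the infinite dimensional evaluation $L_A(X)$; I expect it to be resolved cleanly by the compression-and-strong-limit step above. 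I note that, unlike the closedness results of Section \ref{section:Closed}, this argument uses only strong convergence of the compressions and not compactness of $X$, compactness entering the paper only to force $K_X$ to be closed.
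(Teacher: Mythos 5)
Your proposal is correct, and it takes a genuinely different route from the paper's proof. The paper never argues at the level of the positive cones of $M_d(\cR_{K_X})$: it sets $X^n=P_nXP_n$, viewed as a tuple on all of $\cH$, writes $X^n=\iota_n^*X\iota_n\oplus 0_{\cH_n^\perp}$, and notes that since $\iota_n^*X\iota_n$ and $0$ both lie in $K_X$, the map $\phi_{X^n}=\phi_{\iota_n^*X\iota_n}\oplus\bigl(I_{\cH_n^\perp}\otimes\phi_0\bigr)$ is unital completely positive by \cite[Proposition 3.5]{WW99}; compactness of $X$ then gives $\|X^n-X\|\to 0$, hence $\phi_{X^n}(L_A)\to\phi_X(L_A)$ in norm for every pencil, and a pointwise norm limit of completely positive maps is completely positive. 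You instead test positivity directly against the definition of the matrix order on $\cR_{K_X}$: the compression identity $(I_{\cH_d}\otimes\iota_\ell)^*L_A(X)(I_{\cH_d}\otimes\iota_\ell)=L_A(X^\ell)$ with $X^\ell=\iota_\ell^*X\iota_\ell\in K_X(\ell)$ gives $(I_{\cH_d}\otimes P_\ell)\,L_A(X)\,(I_{\cH_d}\otimes P_\ell)\succeq 0$, and the strong convergence $P_\ell\to I_\cH$ finishes; all of these steps check out, including your observation that $0\in K_X$ follows from the finite-interior hypothesis, which both arguments need in order to invoke Proposition \ref{prop:AffineArePencils}. The trade-off is this: the paper's argument is softer, reducing everything to maps already known to be unital completely positive (evaluations at points of $K_X$, amplified and summed) plus a norm limit, at the cost of using compactness of $X$ a second time inside the proof; your argument replaces that norm limit by a strong-operator limit of compressions, so compactness plays no role inside the proof and enters only through Theorem \ref{theorem:KxClosedBoundedMatConvex} (closedness of $K_X$, which the Webster--Winkler framework requires), at the cost of having to handle well-definedness of $\phi_X$ yourself --- which your parenthetical remark correctly does by the same limiting device, a point that matters because the coefficient tuple $A$ representing an element of $\cR_{K_X}$ need not be unique when $K_X(1)$ spans a proper affine subspace of $\R^g$.
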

\begin{proof}

By assumption $0$ is in the finite interior of $K_X$. Therefore, Proposition \ref{prop:AffineArePencils} shows that $\cR_{K_X}$ is equal to the set of linear pencils on $K_X$.

 For all integers $n \in \bbN$ let $P_n: \cH \to \cH$ be the orthogonal projection of $\cH$ onto $\cH_n$, and define $X^n \in \mathfrak{K}(\cH)^g$ to be the tuple $X^n=P_n X P_n$. Observe that $X^n=\iota_n^* X \iota_n \oplus 0_{\cH_n^\perp}$ where $\iota_n:\cH_n \to \cH$ is the inclusion map of $\cH_n$ into $\cH$.

From the definition of $K_X$ we know that $\iota^*_n X \iota_n \in K_X$. Since $0,\iota^*_n X \iota_n \in K_X$, using \cite[Proposition 3.5]{WW99} we find $\phi_0,\phi_{\iota_n^* X \iota_n} \in CS(\cR_{K_X})$.  As such, the equality
 \beq
 \label{eq:phiXnEqualityOne}
 \phi_{X^n}(L_A)=L_A(\iota_n^* X \iota_n \oplus (I_{\cH_n^\perp} \otimes 0))=L_A(\iota_n^* X \iota_n) \oplus \big(I_{\cH_n^\perp} \otimes L_A(0)\big)=\phi_{\iota_n^* X \iota_n} \oplus \big(I_{\cH_n^\perp} \otimes \phi_{0}(L_A)\big)
 \eeq
for all $L_A \in \cR_{K_X}$ shows that $\phi_{X^n}$ is completely positive for all $n \in \bbN$.

Now fix $d \in \bbN$ and a $L_A \in M_d(\cR_{K_X})$. Since $X$ is compact we have $\lim X^n=X$ where the convergence is in norm. Furthermore, linear pencils are continuous maps, so
\[
\lim \phi_{X^n}(L_A)=\lim L_A(X^n)=L_A(X)=\phi_X(L_A).
\]
Since each $\phi_{X^n}$ is completely positive, it follows that $\phi_X$ is completely positive on $\cR_{K_X}$ as claimed.

To see that $\phi_X$ is unital let $1_\cR$ be the identity in $\cR_{K_X}$. Then $1_\cR$ is the linear pencil
\[
1_\cR=L_{(1,0,0, \dots, 0)}.
\]
The evaluation
\[
\phi_X(1_\cR)=L_{(1,0,0, \dots, 0)}(X)=I_\cH
\]
shows $\phi_X$ is unital.

\end{proof}

\section{Absolute Extreme Points of Noncommutative Convex Hulls}
\label{section:NoAbsExtreme}

We are almost in position to prove our main result. We first give a lemma that describes the reducing subspaces of a direct sum of a fixed $g$-tuple with itself.

\begin{lem}
\label{lem:InvarSubspaces}
Let $\cH$ be an infinite dimensional Hilbert space and let $X \in \bbS(\cH)^g$ be a $g$-tuple of self-adjoint operators on $\cH$. Assume that every nontrivial reducing subspace of $X$ is infinite dimensional. Then, for any integer $N \in \bbN$, every nontrivial reducing subspace of $I_N \otimes X \in \bbS(\cH^N)^g$ is infinite dimensional.

\end{lem}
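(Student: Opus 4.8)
The plan is to prove the contrapositive style statement directly: suppose $\mathcal{W} \subseteq \mathcal{H}^N$ is a nontrivial reducing subspace for $I_N \otimes X$; I want to show $\mathcal{W}$ is infinite dimensional, assuming every nontrivial reducing subspace of $X$ is infinite dimensional. The key observation is that a reducing subspace for a tuple of self-adjoint operators is exactly a subspace invariant under each $I_N \otimes X_i$ (invariance plus self-adjointness gives invariance of the orthogonal complement automatically). So I am studying joint-invariant subspaces of the $N$-fold ampliation. The natural tool is the commutant: $\mathcal{W}$ is reducing for $I_N \otimes X$ if and only if the orthogonal projection $P$ onto $\mathcal{W}$ commutes with every $I_N \otimes X_i$. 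Writing $P$ as an $N \times N$ block operator matrix $P = (P_{jk})_{j,k=1}^N$ with $P_{jk} \in B(\mathcal{H})$, the commutation relation $P(I_N \otimes X_i) = (I_N \otimes X_i)P$ becomes, blockwise, $P_{jk} X_i = X_i P_{jk}$ for every $i=1,\dots,g$ and every pair $j,k$. Thus each block $P_{jk}$ lies in the commutant of the tuple $X = (X_1,\dots,X_g)$.

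The hard part will be passing from ``each block commutes with $X$'' to ``$\mathcal{W}$ is infinite dimensional,'' since the commutant of $X$ could in principle contain rank-one or finite-rank operators that assemble into a finite-rank projection $P$. The way I would rule this out is as follows. Since $P$ is a nonzero projection, some diagonal block is nonzero, or more robustly, since $P \neq 0$ there exist indices $j,k$ with $P_{jk} \neq 0$. Each nonzero $P_{jk}$ commutes with all $X_i$, so its range and kernel are reducing subspaces of $X$ (for a single self-adjoint tuple, an operator $T$ commuting with each $X_i$ has $\overline{\operatorname{ran} T}$ and $\ker T$ reducing for $X$). The point is that a nonzero operator commuting with $X$ cannot have finite-dimensional closed range: if $\overline{\operatorname{ran} P_{jk}}$ were finite dimensional and nontrivial it would be a finite dimensional reducing subspace of $X$, contradicting the hypothesis; and it cannot be the zero subspace since $P_{jk} \neq 0$. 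Hence $\overline{\operatorname{ran} P_{jk}}$ is infinite dimensional.

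Now I would leverage this to bound the dimension of $\mathcal{W} = \operatorname{ran} P$ from below. Fix a pair $(j,k)$ with $P_{jk} \neq 0$ and consider the coordinate embedding $\iota_k : \mathcal{H} \to \mathcal{H}^N$ into the $k$-th summand and the coordinate projection $\pi_j : \mathcal{H}^N \to \mathcal{H}$ onto the $j$-th summand, so that $\pi_j P \iota_k = P_{jk}$. The composition $\pi_j|_{\mathcal{W}} \circ (P\iota_k) $ has infinite-dimensional closure of range inside $\mathcal{H}$, and since $\pi_j$ factors through $\mathcal{W}$ (because $P\iota_k$ maps into $\mathcal{W}$), the range of $P\iota_k$ inside $\mathcal{W}$ must itself be infinite dimensional: a bounded map with infinite dimensional closed range cannot factor through a finite dimensional space, as the image would then be finite dimensional and so would any continuous image of it. Therefore $\mathcal{W}$ contains the infinite-dimensional subspace $\overline{\operatorname{ran}(P\iota_k)}$, which forces $\dim \mathcal{W} = \infty$. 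I expect the main obstacle to be the clean formalization of this last factorization argument — in particular making sure one correctly invokes that $\overline{\operatorname{ran} P_{jk}}$ infinite dimensional forces $\overline{\operatorname{ran}(P \iota_k)}$ infinite dimensional despite $P_{jk} = \pi_j P \iota_k$ being only a compression of $P\iota_k$ — so I would state carefully that $P_{jk} = \pi_j \circ (P\iota_k)$ is a continuous image of $P\iota_k$, whence $\operatorname{rank}(P\iota_k) \geq \operatorname{rank}(P_{jk}) = \infty$.
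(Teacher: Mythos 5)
Your proof is correct, but it takes a genuinely different route from the paper's. The paper never introduces the orthogonal projection onto $W$ or the commutant: it argues at the level of vectors, writing each $v \in W$ as $\oplus_{n=1}^N v_n$ with $v_n = \iota_n^* v$, noting that $(I_N \otimes X_i)v = \oplus_{n=1}^N X_i v_n$ lies in $W$, and concluding that each coordinate image $\iota_n^* W$ is invariant under every $X_i$, hence (by self-adjointness) reducing for $X$; the hypothesis then forces each $\iota_n^* W$ to be either $\{0\}$ or infinite dimensional, and since $\iota_n^* W$ is a continuous linear image of $W$, either $W = \{0\}$ or $\dim W = \infty$. You instead pass to the projection $P$ onto $W$, use the correspondence between reducing subspaces and projections commuting with the tuple, read off that each block $P_{jk}$ lies in the commutant of $X$, take $\overline{\operatorname{ran} P_{jk}}$ as your nonzero reducing subspace of $X$, and finish with the rank inequality $\operatorname{rank}(P\iota_k) \geq \operatorname{rank}(P_{jk}) = \infty$. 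The two arguments share the same closing principle (an operator factoring through a finite dimensional space has finite rank), and your blocks refine the paper's objects, since $\iota_j^* W$ is exactly the span of $\operatorname{ran} P_{j1}, \dots, \operatorname{ran} P_{jN}$. What your route buys is structure: it makes explicit the standard fact that the commutant of $I_N \otimes X$ is the algebra of $N \times N$ matrices over the commutant of $X$, so the argument generalizes verbatim to other ampliations. What the paper's route buys is economy: it is shorter and needs nothing beyond the definition of a reducing subspace. One wording nit: your phrase ``infinite dimensional closed range'' should be ``infinite rank'' (the closure of the range is infinite dimensional); $P_{jk}$ need not have closed range, but your final inequality $\operatorname{rank}(P\iota_k) \geq \operatorname{rank}(P_{jk})$ states the argument correctly, so this is cosmetic rather than a gap.
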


\begin{proof}
Fix an integer $N \in \bbN$ and let $W \subset \cH^N$ be any reducing subspace for $I_N \otimes X$. For each $n=1, \dots, N$ define $\cM_n=\cH$. Then $\cH^N=\oplus_{n=1}^N \cM_n$. For each $n=1, \dots, N$ let $\iota_n:\cM_n \to \cH^N$ be the inclusion map of $\cM_n$ in $\cH^N$. Given $v \in W$, define $v_n \in \iota_n^* W$ by $v_n=\iota_n^* v$ for all $n=1, \dots, N$. Then $v$ can be written $v=\oplus_{n=1}^N v_n$. Observe that $(I_N \otimes X_i) v=\oplus_{n=1}^N (X_i v_n)$ for all $i=1, \dots, g$. Since $W$ is a reducing subspace for $I_N \otimes X,$ it follows that $\oplus_{n=1}^N (X_i v_n) \in W$ for all $v \in W$ and all $i=1, \dots, g$. Fix an $n_0 \in \{1, \dots, N\}$. Then applying $\iota^*_{n_0}$ to both sides of the equality we find
\[
X_i v_{n_0}=\iota^*_{n_0}\big(\oplus_{n=1}^N (X_i v_n)\big) \in \iota^*_{n_0} W
\]
for all $v_{n_0} \in \iota^*_{n_0} W$ and all $i=1, \dots, g$. Since $X$ is a tuple of self-adjoint operators, this shows that $\iota^*_{n_0} W$ is a reducing subspace for $X$ for all $n_0 \in \{1, \dots, N\}$. As $X$ was assumed to have no nontrivial finite dimensional reducing subspaces, it follows that $\iota^*_n W=\{0\}$ or $\iota^*_n W$ is infinite dimensional for all $n=1, \dots, N$. If $\iota^*_n W=\{0\}$ for all $n=1, \dots, N$ then $W=\{0\}$. If $\iota^*_n W\neq \{0\}$ for any $n$ then $W$ is infinite dimensional.
\end{proof}

We now complete the proof of Theorem \ref{thm:NoAbsExt}.

\begin{proof}

Theorem \ref{theorem:KxClosedBoundedMatConvex} shows that $K_X$ is a closed bounded matrix convex set, so we only need to show $\abs{K_X}=\emptyset$. Pick an element $Y \in K_X (n)$ and let $\maxRKX$ be the operator system of matrix affine maps on $\maxK_X$. \cite[Theorem 4.2]{KLS14} shows that $Y \in \abs K_X$ if and only if $\phi_Y$ is in the Arveson boundary of $CS(\cR_K)$ and \cite[Proposition 2.4]{A08} shows that if $\phi_Y$ is a boundary representation, then $\phi_Y$ is maximal. Therefore, to show $Y \notin \abs K_X$ it is sufficient to show that the unital completely positive map $\phi_Y: \cR_{K_X} \to \cH_n$ is not maximal.

Using Proposition \ref{prop:FixedBoundOnXSize} there exists an integer $m_n$ depending only on $n$ and $g$ and an isometry $V: \cH_n \to \cH^{m_n}$ so that $Y=V^*(I_{m_n} \otimes X) V$. This implies that there is a unitary $U:\cH^{m_n} \to \cH^{m_n}$ so that $U (I_{m_n} \otimes X) U^*$ is a dilation of $Y$. It follows that the map
\[
\phi_{U(I_{m_n} \otimes X)U^*}:\cR_{K_X} \to B(\cH^{m_n})
\] is a dilation of the unital completely positive map $\phi_Y:\cR_{K_X} \to \cH_n$. Furthermore, Proposition \ref{prop:phiXisCP} shows $\phi_X$ is a unital completely positive map on $\cR_{K_X}$, so the equality
\[
\phi_{U(I_{m_n} \otimes X)U^*} (L_A)=U\big(I_\cH \otimes \phi_X(L_A)\big)U^*
\]
for all $L_A \in \cR_{K_X}$ shows $\phi_{U(I_{m_n} \otimes X)U^*}$ is a unital completely positive map on $\cR_{K_X}$.

Assume towards a contradiction that $\phi_Y$ is a maximal unital completely positive map. Since $\phi_{U (I_{m_n} \otimes X) U^*}$ is a unital completely positive dilation of $\phi_Y$, there must exist some unital completely positive map $\psi:\cR_{K_X} \to \cH_n^\perp $ so that $\phi_{U(I_{m_n} \otimes X)U^*}=\phi_Y \oplus \psi$. Note that in this definition, $\cH_n$ is viewed as a subspace of $\cH^{m_n}$, so $\cH_n^\perp$ is the orthogonal complement of $\cH_n$ in $\cH^{m_n}$.

For $i=1, \dots, g$ let $\eta_i \in \maxRKX$ be the evaluation map defined by
\[
\phi_S (\eta_i)=S_i \mathrm{\ for\ all\ } S \in \maxK_X.
\]
Define $Z \in \bbS(\cH_n^\perp)^g$ to be the tuple
\[
Z=(\psi (\eta_1), \dots, \psi (\eta_g)).
\]

Considering the evaluations
\[
\phi_{U(I_{m_n} \otimes X)U^*} (\eta_i)= \phi_Y (\eta_i) \oplus \psi (\eta_i)
\]
for $i=1, \dots, g$ shows that $I_{m_n} \otimes X=U^* (Y \oplus Z) U$. In particular, the invariant subspaces of $I_{m_n} \otimes X$ must be equal to the invariant subspaces of $U^*(Y \oplus Z)U$.

Since $X$ is assumed to have no nontrivial finite dimensional reducing subspaces, Lemma \ref{lem:InvarSubspaces} shows that any nontrivial reducing subspace of $I_{m_n} \otimes X$ is infinite dimensional. Observe that the subspace $W \subset \cH^{m_n}$ defined by
\[
W=\{U^*(v \oplus 0_{{\cH_n}^\perp}) |\ v \in \cH_n\}
\]
is a nontrivial $n$ dimensional reducing subspace for $U^*(Y \oplus Z) U$, and hence for $I_{m_n} \otimes X,$ which contradicts Lemma \ref{lem:InvarSubspaces}. It follows that there is no unital completely positive map $\psi$ so that $\phi_Y \oplus \psi=\phi_{U (I_{m_n} \otimes X) U^*}$. In particular, $\phi_Y$ is not maximal. This shows that $Y \notin \abs K_X$ and $\abs K_X=\emptyset$.

\end{proof}

\section{Examples}
\label{section:ExampleX}

\def\frakI{\mathfrak{I}}

The following section gives an explicit example of a tuple $X \in \mathfrak{K}(\cH)^2$ of compact operators with no nontrivial finite dimensional reducing subspaces so that $0$ is in the finite interior of $K_X$. Throughout the section set $\cH=\ell^2(\bbN)$ and $\cH_n=\ell^2(1, \dots, n) \subset \cH $ for all $n \in \bbN$.

Given a weight vector $w=(w_1, w_2, \dots) \in \R^\infty$ define the weighted forward shift $S_w: \cH \to \cH$ by
\[
S_w v=(0,w_1 v_1, w_2 v_2, \dots)
\]
for all $v \in \cH$. Additionally, for each $n \in \bbN$, let $\mathfrak{I}_n: \cH \to \cH$ be the operator defined by
\[
\mathfrak{I}_n v=(v_1, \dots, v_n, 0,0, \dots)
\]
for all $v \in \cH$.

\begin{prop}
\label{prop:ExampleX}
Let $X_1=\operatorname{diag}(\lambda_1, \lambda_2, \dots)$ where the $\lambda_i$ nonzero real numbers converging to $0$ with distinct norms and let $S_w$ be a weighted shift where $w \in \R^\infty$ is chosen so $w_i \neq 0$ for all $i$ and $S_w$ is compact. Set
\[
X_2={S_w+S_w^*}.
\]
Then there exists real numbers $\alpha_1, \alpha_2$ so that the tuple
\[
\tilde{X}=(X_1+\alpha_1 \frakI_2,X_2+\alpha_2 \frakI_2)
\]
is a tuple of compact self-adjoint operators with no finite dimensional reducing subspaces and so that $0$ is in the finite interior of $K_{\tilde{X}}$.
\end{prop}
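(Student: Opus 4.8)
The plan is to construct $\tilde{X}$ by verifying three separate properties of the candidate tuple: that it consists of compact self-adjoint operators, that it has no nontrivial finite dimensional reducing subspaces, and that $0$ lies in its finite interior. The compactness is essentially free, since $X_1$ is a diagonal operator whose entries tend to $0$ (hence compact), $S_w$ is compact by hypothesis so $X_2 = S_w + S_w^*$ is compact, and $\frakI_2$ is a finite rank (rank two) operator. Self-adjointness of each component is clear: $X_1$ is real diagonal, $S_w + S_w^*$ is manifestly self-adjoint, and $\frakI_2$ is a self-adjoint projection-type operator. Thus for any real $\alpha_1, \alpha_2$ the tuple $\tilde{X}$ is automatically a tuple of compact self-adjoint operators, independent of the choice of $\alpha_1, \alpha_2$.

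The heart of the argument is showing there is \emph{no} nontrivial finite dimensional reducing subspace. First I would handle the tuple $(X_1, X_2)$ directly, since the finite-rank perturbations by $\frakI_2$ should not create new reducing subspaces; indeed, the key observation is that a reducing subspace $W$ for $\tilde{X}$ need not be reducing for $(X_1, X_2)$, so one must argue more carefully. The cleanest route is to show that $(X_1, X_2)$ together with the perturbing operators generate enough structure that the only reducing subspaces are trivial. I would exploit that $X_1$ is diagonal with \emph{distinct norms} $|\lambda_i|$, so a reducing subspace for $X_1$ must be spanned by a subset of the standard basis vectors $e_i$; then the weighted shift $X_2$, having all weights $w_i \neq 0$, connects every basis vector to its neighbors, forcing any subspace reducing both $X_1$ and $X_2$ to be either $\{0\}$ or all of $\cH$. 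The complication is that $\tilde{X}$ differs from $(X_1, X_2)$ by the rank-two operators $\alpha_j \frakI_2$, so I would argue that a finite dimensional reducing subspace of $\tilde{X}$, by an eigenvalue/spectral analysis combined with the irreducibility engine just described, cannot exist for generic (indeed, for suitable) $\alpha_1, \alpha_2$.

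For the finite interior condition, the plan is to produce an integer $d$ and a unit vector $v \in \cH^d$ with $v^*(I_d \otimes \tilde{X})v = 0 \in \R^2$. This amounts to two scalar equations, $v^*(I_d \otimes (X_1 + \alpha_1 \frakI_2))v = 0$ and $v^*(I_d \otimes (X_2 + \alpha_2 \frakI_2))v = 0$, which I would satisfy by first choosing $v$ supported on the first few coordinates so that the $\frakI_2$ terms are fully active, then \emph{solving for $\alpha_1$ and $\alpha_2$} in terms of $v$. Concretely, once a test vector $v$ is fixed with $v^*(I_d \otimes \frakI_2)v \neq 0$, the two equations become affine in $\alpha_1$ and $\alpha_2$ respectively and can be solved uniquely; this is precisely where the freedom to choose the constants $\alpha_1, \alpha_2$ is used. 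This choice of $\alpha_1, \alpha_2$ then feeds back into the reducing subspace analysis, so I would arrange the two parts consistently.

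The step I expect to be the main obstacle is the reducing subspace analysis for the \emph{perturbed} tuple $\tilde{X}$ rather than for $(X_1, X_2)$: a finite dimensional reducing subspace $W$ for $\tilde{X}$ satisfies $\tilde{X}_j W \subseteq W$ for $j = 1, 2$, but $W$ need not be reducing for $X_1$ or $X_2$ individually, so the clean "distinct norms force a basis-subset subspace" argument does not apply verbatim. The resolution I anticipate is to show that, because $X_1 + \alpha_1 \frakI_2$ still has distinct eigenvalue moduli outside the finitely many perturbed coordinates and the shift $X_2 + \alpha_2 \frakI_2$ still couples all coordinates with nonzero weights, any reducing subspace is forced to contain a full "tail" of basis directions and hence cannot be finite dimensional unless it is $\{0\}$. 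Establishing this coupling-plus-distinctness argument rigorously, while keeping the finite-rank perturbation under control, is the technical crux of the proof.
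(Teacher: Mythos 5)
Your construction is the same as the paper's (perturb by multiples of $\mathfrak{I}_2$, choose the constants via a test vector, then run a diagonal-plus-shift irreducibility argument), but the step you yourself call the crux is left genuinely unresolved, and the way you order your choices actually forecloses the paper's resolution. You propose to fix the test vector $v$ first and then solve the two affine equations for $\alpha_1,\alpha_2$ \emph{uniquely}. Once you do that, $\alpha_1$ is completely determined and you have no freedom left to exclude the bad coincidences: $\lambda_1+\alpha_1=0$, or $|\lambda_1+\alpha_1|=|\lambda_2+\alpha_1|$, or $|\lambda_i+\alpha_1|=|\lambda_j|$ for $i\in\{1,2\}$ and $j\geq 3$. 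In such cases $\tilde{X}_1$ has a zero diagonal entry or a modulus collision; in the worst case (an exactly repeated eigenvalue) it has a two-dimensional eigenspace, every line of which is invariant, so closed invariant subspaces are no longer just coordinate spans and your ``coupling-plus-distinctness'' engine has nothing to run on. Your phrase ``distinct eigenvalue moduli \emph{outside} the finitely many perturbed coordinates'' concedes exactly the cases that must be ruled out: collisions involving the two perturbed entries are the only possible ones, and nothing in your plan excludes them. Saying you ``would arrange the two parts consistently'' names the gap; it does not fill it.

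The paper closes the gap by reversing the quantifiers: the freedom lives in the choice of the unit vector $v_0\in\cH_2$, not in $\alpha_1$ after $v_0$ is fixed. Since $\lambda_1\neq\lambda_2$, the number $\alpha_1=-\langle X_1 v_0,v_0\rangle$ sweeps a nondegenerate interval (negatives of convex combinations of $\lambda_1,\lambda_2$) as $v_0$ ranges over unit vectors of $\cH_2$, while only countably many values of $\alpha_1$ are bad; pick $v_0$ avoiding them and then set $\alpha_2=-\langle X_2 v_0,v_0\rangle$, on which no condition at all is needed. With that choice the irreducibility argument is much shorter than you anticipate, and no ``control of the finite-rank perturbation'' is required: $\mathfrak{I}_2$ is itself diagonal, so $\tilde{X}_1$ is still a compact diagonal operator, now with nonzero entries of pairwise distinct moduli, and Lemma \ref{lem:DiagInvarSubspaces} applies to $\tilde{X}_1$ verbatim. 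A finite dimensional reducing subspace $W$ of the tuple is in particular a closed invariant subspace of the self-adjoint operator $\tilde{X}_1$ alone, hence $W=\oplus_{j\in J}E_j$ with $J$ finite; taking $j_0=\max J$, the vector $\tilde{X}_2 e_{j_0}$ has the nonzero component $w_{j_0}e_{j_0+1}$ orthogonal to $W$, a contradiction. So your worry that $W$ ``need not be reducing for $(X_1,X_2)$'' is beside the point --- the argument only ever uses invariance under $\tilde{X}_1$ --- whereas the unaddressed dependence of the spectral separation of $\tilde{X}_1$ on your choice of $v$ is the real defect in the proposal.
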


Before giving the proof of Proposition \ref{prop:ExampleX}, we state a lemma which describes the invariant subspaces of a diagonal operator.

\begin{lem}
\label{lem:DiagInvarSubspaces}
Let $X=\mathrm{diag}(\lambda_1, \lambda_2, \dots)$ where the $\lambda_i$ nonzero real numbers converging to $0$ with distinct norms, and let $W$ be a closed invariant subspace of $X$. Then $W=\oplus_{j\in J} E_j$ for some index set $J \subset \bbN$ where $E_j$ denotes $j$th coordinate subspace.
\end{lem}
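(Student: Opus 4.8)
The plan is to realize each coordinate projection as a norm-limit of polynomials in $X$ and then exploit that $W$, being closed and invariant under $X$, is automatically invariant under every such limit. First I would record the spectral data: since the $\lambda_j$ have distinct norms they are in particular distinct, so $\sigma(X)=\{\lambda_j : j \in \bbN\}\cup\{0\}$ and the eigenspace of $\lambda_j$ is exactly the $j$th coordinate subspace $E_j$, with rank-one orthogonal projection $P_j$ onto $E_j$. Because $\lambda_j\neq 0$ for all $j$ and $\lambda_j\to 0$, each $\lambda_j$ is an isolated point of $\sigma(X)$, while $0$ is the only accumulation point.

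The key step is to produce $P_j$ inside the norm-closed unital algebra generated by $X$. For fixed $j$ define $f_j$ on $\sigma(X)$ by $f_j(\lambda_j)=1$ and $f_j=0$ on $\sigma(X)\setminus\{\lambda_j\}$. Since $\lambda_j$ is isolated and $f_j(\lambda_\ell)=0$ for all $\ell\neq j$ (so that $f_j(\lambda_\ell)\to 0=f_j(0)$), the function $f_j$ is continuous on the compact set $\sigma(X)$, and the continuous functional calculus gives $f_j(X)=P_j$. By Stone--Weierstrass there are polynomials $p_m$ with $p_m\to f_j$ uniformly on $\sigma(X)$, whence $p_m(X)\to P_j$ in operator norm.

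Now $W$ is invariant under $X$, hence under $X^k$ for every $k$ and under the identity, so $p_m(X)W\subseteq W$ for all $m$; since $W$ is closed, passing to the norm limit gives $P_j W\subseteq W$ for every $j$. Set $J=\{\,j : P_j W\neq\{0\}\,\}$. For $j\in J$, the rank-one-ness of $P_j$ forces $P_j W=E_j\subseteq W$, so $\overline{\bigoplus_{j\in J}E_j}\subseteq W$. Conversely, for $w\in W$ the spectral decomposition $w=\sum_j P_j w$ converges in norm (the $E_j$ span $\cH$), and $P_j w\in P_j W$ vanishes whenever $j\notin J$; hence $w\in\overline{\bigoplus_{j\in J}E_j}$. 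This yields $W=\bigoplus_{j\in J}E_j$, as claimed.

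The main obstacle, and the only nonroutine point, is the second step: establishing that each spectral projection $P_j$ lies in the norm closure of the polynomials in $X$, so that invariance of $W$ under $X$ upgrades to invariance under $P_j$. This is exactly where the hypotheses that the $\lambda_j$ are nonzero, distinct, and accumulate only at $0$ are used---they are precisely what makes each singleton $\{\lambda_j\}$ relatively open in $\sigma(X)$, hence $f_j$ continuous, so that the continuous functional calculus and Stone--Weierstrass apply.
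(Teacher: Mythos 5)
Your proof is correct, but it proceeds by a genuinely different mechanism than the paper's. The paper argues by power iteration: among the indices $j$ for which some $v \in W$ has $v_j \neq 0$, it picks the unique index $j_0$ maximizing $|\lambda_{j_0}|$, observes that $X^\ell v / \lambda_{j_0}^\ell \to v_{j_0} e_{j_0}$, concludes $e_{j_0} \in W$ by closedness, splits off $E_{j_0}$, and proceeds by induction. You instead show that each spectral projection $P_j$ lies in the norm closure of the polynomials in $X$ (isolation of $\lambda_j$ in $\sigma(X)$ makes the indicator $f_j$ continuous, then functional calculus plus Stone--Weierstrass), so that closedness and $X$-invariance of $W$ upgrade to $P_j W \subseteq W$ for every $j$ simultaneously, after which both inclusions of the decomposition follow at once. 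Each approach has its advantages: the paper's argument is elementary, using nothing beyond norm convergence of iterates, but it leans essentially on the distinct-norms hypothesis (the dominant eigenvalue must be unique for the iteration to converge) and its closing induction is left somewhat implicit for infinitely many indices. Your argument is a one-shot proof that handles the ``completeness'' direction ($W \subseteq \oplus_{j \in J} E_j$) explicitly via the norm-convergent expansion $w = \sum_j P_j w$, and it is actually more general: it only needs the $\lambda_j$ to be distinct, nonzero, and accumulating solely at $0$, whereas distinctness of the absolute values --- which the paper requires --- plays no role. One cosmetic point: state clearly at the end that the direct sum in the conclusion is the closed (Hilbertian) direct sum, i.e. $W = \overline{\bigoplus_{j\in J} E_j}$, which is what the lemma's notation $\oplus_{j \in J} E_j$ means here.
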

\begin{proof}
If $W=\{0\}$ then the proof is trivial, so assume $W \neq \{0\}$. Define
\[
J=\{j \in \bbN| \ \mathrm{ there \ exists \ a \ vector\ } v=(v_1, v_2, \dots) \in W \mathrm{\ so \ that\ } v_j \neq 0\}.
\]
Since $W \neq \{0\}$ we have $J \neq \emptyset$. We will show $W=\oplus_{j\in J} E_j$.

By assumption the $\lambda_i$ have distinct norms and converge to zero so there is a unique index $j_0 \in J$ such that $|\lambda_{j_0}|=\max_{j \in J} |\lambda_j|$. Choose a vector $v \in J$ so that $v_{j_0} \neq 0$. Then
\[
\lim_{\ell \to \infty} \frac{X^\ell v}{\lambda_{j_0}^\ell}=v_{j_0} e_{j_0}.
\]
Therefore $e_{j_0} \in W$ since $W$ is closed.

Since $E_{j_0}$ and $W$ are closed invariant subspaces of $X$ with $E_{j_0} \subset W$, it follows that $W=E_{j_0} \oplus W'$ where $W'$ is a closed invariant subspace of $X$. Proceeding by induction completes the proof.
\end{proof}

We now prove Proposition \ref{prop:ExampleX}.

\begin{proof}

We first prove the existence of the real numbers $\alpha_1, \alpha_2$ so that $0$ is in the finite interior of $K_X$. Let $\iota_2:\cH_2 \to \cH$ be the inclusion map of $\cH_2 \to \cH$. Since $\lambda_1 \neq \lambda_2$, there exists a unit vector $v_0 \in \cH_2$ so that, setting \[
\alpha_1= -\langle \iota^*_2 X_1 \iota_2 v_0,v_0 \rangle,
\]
the eigenvalues of
\[
X_1+\alpha_1 \frakI_2=\operatorname{diag} (\lambda_1+\alpha_1, \lambda_2+\alpha_1,\lambda_3,\lambda_4,\dots)
\]
are nonzero real numbers with distinct norms.

Set
\[
\alpha_2= -\langle \iota^*_2 X_2 \iota_2 v_0,v_0 \rangle.
\]
Then
\[
v_0^*(\iota_2 X_1 \iota_2+\alpha_1 I_2,\iota_2 X_2 \iota_2+\alpha_2 I_2)v_0=0 \in \R^2.
\]
Setting $\tilde{X}_i=X_i+\alpha_i \frakI_2$ for $i=1,2$, it follows that
\beq
\label{eq:ThePropertyWeWant}
v_0^*  (\iota_2^* \tilde{X}_1 \iota_2, \iota_2^*\tilde{X}_2 \iota_2)v_0=0 \in \R^2.
\eeq
Therefore, $0$ is in the finite interior of $\tilde{X}=(\tilde{X}_1, \tilde{X}_2)$.

It is clear that $\tilde{X}$ is a tuple of compact self-adjoint operator, so it remains to show that $\tilde{X}$ has no finite dimensional reducing subspaces. Let $W$ be a finite dimensional reducing subspace for $\tilde{X}$. Then $W$ must be a closed invariant subspace of $\tilde{X}_1$. Recall that $\alpha_1$ was chosen so that $\tilde{X}_1$ is a diagonal operator whose diagonal entries are real numbers that converge to $0$ with distinct norms. Using Lemma \ref{lem:DiagInvarSubspaces}, it follows that $W=\oplus_{j \in J} E_j$ for some finite index set $J \subset \bbN$ where $E_j$ denotes $j$th coordinate subspace.

Let $j_0$ be the largest integer in $J$. Since $S_w$ is a weighted forward shift with nonzero weights, it is straight forward to see that
\[
\tilde{X}_2 E_{j_0}=\left(S_w+S_w^*+\alpha_2 \frakI_2\right) E_{j_0} \not\subset W.
\]
This shows that $W$ cannot be a reducing subspace of $\tilde{X}$. Therefore $\tilde{X}$ has no finite dimensional reducing subspaces.

\end{proof}

\section*{Acknowledgments}
I thank my Ph.D. advisor, Bill Helton, for many enlightening discussions related to this article and for tremendous help in the preparation of the paper. Comments by Scott McCullough and Igor Klep were also greatly appreciated.

\newpage

NOT FOR PUBLICATION

\tableofcontents

\newpage

\printindex

\end{document}